\theoremstyle{definition}
\newtheorem{theorem}{Theorem}[section]
\newtheorem{example}{Example}
\newtheorem{remark}{Remark}[section]
\newtheorem{lemma}{Lemma}[section]
\numberwithin{equation}{section}%
\numberwithin{table}{section}%
\numberwithin{figure}{section}
\def\3bar{{|\hspace{-.02in}|\hspace{-.02in}|}}
\newcommand\grad{\operatorname{grad}}
\renewcommand\div{\operatorname{div}}
\newcommand\curl{\operatorname{curl}}
\newcommand\Span{\operatorname{span}}
\newcommand\x{\times}
\def\d{\text{d}}
\begin{document}
	\title[Fully $H$(GRADCURL)-nonconforming finite elements]{Fully $H$(GRADCURL)-nonconforming finite element method for the singularly perturbed quad-curl problem on cubical meshes}

	\keywords{nonconforming finite elements, singularly perturbed quad-curl problem, cubical elements}

\author{Lixiu Wang}
\email{lxwang@ustb.edu.cn}
\address{School of Mathematics and Physics, University of Science and Technology Beijing, Beijing 100083, China.
}\thanks{This work is supported in part by the National Natural Science Foundation of China grants NSFC 12101036 and the Fundamental Research Funds for the Central Universities FRF-TP-22-096A1.}
\author{Mingyan Zhang}
\email{myzhang@csrc.ac.cn.}
 \address{Corresponding author. Beijing Computational Science Research Center, Beijing 100193, China.}
 \author{Qian Zhang}
 \email{qzhang15@mtu.edu}
 \address{Department of Mathematical Sciences, Michigan Technological University, Houghton, MI 49931, USA. }
	%

	\subjclass[2000]{65N30 \and 35Q60 \and 65N15 \and 35B45}

	\begin{abstract}
 In this paper, we develop two fully nonconforming (both  $H(\grad\curl)$-nonconforming and  $H(\curl)$-nonconforming) finite elements on cubical meshes which can fit into the Stokes complex. The newly proposed elements have 24 and 36 degrees of freedom, respectively. Different from the fully $H(\grad\curl)$-nonconforming tetrahedral finite elements in \cite{Huang2020Nonconforming}, the elements in this paper lead to a robust finite element method to solve the singularly perturbed quad-curl problem. To confirm this,  we prove the optimal convergence of order $\mathcal O(h)$ for a fixed parameter $\epsilon$ and the uniform convergence of order $\mathcal O(h^{1/2})$ for any value of $\epsilon$. Some numerical examples are used to verify the correctness of the theoretical analysis.  
	\end{abstract}	
	\maketitle
	\section{Introduction}
The quad-curl equation appears in various models, such as the inverse electromagnetic scattering theory \cite{Cakoni2017A,Monk2012Finite, Sun2016A}, couple stress theory in linear elasticity \cite{mindlin1962effects,park2008variational}, and  magnetohydrodynamics \cite{Zheng2011A}. The corresponding quad-curl eigenvalue problem plays a fundamental role in the analysis and computation of the electromagnetic interior transmission eigenvalues \cite{sun2011iterative}. 
In this paper, we consider the following quad-curl singular perturbation problem on a bounded polyhedral domain $\Omega\subset\mathbb{R}^3$: find $\bm u$ such that 
	\begin{align}\label{OriginProblem}
	\begin{split}
	-\epsilon\curl\Delta \curl \bm u+\alpha\curl \curl \bm u +\beta \bm u&=\bm f \quad\text{in}\ \Omega,\\
	\div \bm u&=0\quad\text{in}\ \Omega,\\
	\bm u\times \bm n&=0\quad\text{on}\ \partial\Omega,\\
	\curl \bm u&=0\quad\text{on}\ \partial\Omega.
	\end{split}
	\end{align}
	Here $\bm f\in H(\text{div}^0;\Omega):=\{\bm u\in [L^2(\Omega)]^3:\; \div \bm u=0\}$, $\alpha> 0$ and $\beta\geq 0$ are constants of moderate size, $\epsilon>0$ is a constant that can approach 0, and $\bm n$ is the unit outward normal vector to $\partial \Omega$.

Standard conforming finite element methods to solve problem \eqref{OriginProblem} require function spaces to be subspaces of $H(\grad\curl;\Omega)$, see Section \ref{notation} for its precise definition. Some $H$(gradcurl)-conforming finite elements have been constructed in the past years. So far, the construction in 2 dimensions(2D) is relatively complete \cite{WZZelement,hu2020simple,wang2021h}, however, that is not the case for 3 dimensions (3D).      
In \cite{ZhangCSIAM2021A}, one of the authors and Z. Zhang developed a tetrahedral $\grad\curl$-conforming element, which has $315$ degrees of freedom (DOFs) on each element. To reduce the number of DOFs, they, together with their collaborators, enriched  the
shape function space on each tetrahedron with piecewise-polynomial bubbles \cite{Hu2022A}, and the resulting element has only 18 DOFs. However, it is challenging to extend the idea of enriching bubbles to cubical element. The only grad curl-conforming element on cubical meshes constructed in \cite{wang2021hh} has $144$ DOFs on each cube. 

Therefore, we resort to nonconforming elements to reduce the number of DOFs. 
On tetrahedral meshes, Zheng et al. constructed the first nonconforming finite element in \cite{Zheng2011A}, which has $20$ DOFs on each element. Recently, Huang \cite{Huang2020Nonconforming} proposed a nonconforming finite element Stokes complex which includes two $\grad\curl$-nonconforming finite elements:  one coincides with the element in \cite{Zheng2011A} and the other one has only 14 DOFs.
However, the convergence rates of the two nonconforming finite elements in \cite{Huang2020Nonconforming,Zheng2011A} will deteriorate when  $\epsilon\to0$ in problem \eqref{OriginProblem}. Hence, Huang and Zhang in \cite{huang2022robust} developed two families of $H(\grad\curl)$-nonconforming  but $H$(curl)-conforming finite elements on tetrahedral meshes, 
which can solve problem \eqref{OriginProblem} when $\epsilon$ tends to zero, see also \cite{BJzhang22}.
On cubical meshes, one of the authors and her collaborators proposed a family of nonconforming finite elements with at least $48$ DOFs in \cite{zhang2022nfecubicalmeshes}. This family is also $H$(curl)-conforming, and hence can solve problem \eqref{OriginProblem} as $\epsilon$ vanishes. To further reduce the number of DOFs, in this paper, we will construct both $H(\grad\curl)$-nonconforming and $H(\curl)$-nonconforming finite elements. Different with the tetrahedral elements, the fully nonconforming elements that we will construct still work for problem \eqref{OriginProblem} when $\epsilon\rightarrow 0$.
	
To this end, we consider the following Stokes complex:
\begin{equation}\label{stokes-complex}
\begin{tikzcd}
0 \arrow{r} & \mathbb R\arrow{r}{\subset}& H^{1}(\Omega) \arrow{r}{\grad} &H(\grad\curl;\Omega)\arrow{r}{\curl} & {[H^1(\Omega)]}^3   \arrow{r}{\div} &L^2(\Omega)  \arrow{r}{} & 0.
 \end{tikzcd}
\end{equation}
We will develop the fully $\grad\curl$-nonconforming elements by constructing a discrete nonconforming Stokes complex on cubical meshes:
	\begin{equation}\label{FE-stokes-complex}
		\begin{tikzcd}
			0\arrow{r}{}& \mathbb R\arrow{r}{\subset}& S_h^r(\mathcal T_h) \arrow{r}{\grad} & \bm V_h^{r-1}(\mathcal T_h)\arrow{r}{\curl_h} &  \bm W_h(\mathcal T_h) \arrow{r}{\div_h} &  Q_h(\mathcal T_h) \arrow{r}{} & 0,
		\end{tikzcd}
	\end{equation}
	where $S_h^r(\mathcal T_h)$, $\bm V_h^{r-1}(\mathcal T_h)$, $\bm W_h(\mathcal T_h)$, and $Q_h(\mathcal T_h)$ are conforming or nonconforming finite element spaces for $H^{1}(\Omega)$, $H(\grad\curl;\Omega)$, ${[H^1(\Omega)]}^3$, and $L^2(\Omega)$, respectively. To make the number of DOFs  as minimal as possible, we use the $r$-th order serendipity finite element space  with $r=1,2$ for $S_h^r(\mathcal T_h)$.  We choose the nonconforming Stokes pair developed in \cite{zhang2009low} for  $\bm W_h(\mathcal T_h)$ and $Q_h(\mathcal T_h)$. Then the $\grad\curl$-nonconforming finite element space $\bm V_h^{r-1}(\mathcal T_h)$ is obtained as the gradient of $S_h^r(\mathcal T_h)$ and a complementary part whose $\curl_h$ falls into $\bm W_h(\mathcal T_h)$. To be precise, the shape function space $\bm V_h^{r-1}(K)$ of $\bm V_h^{r-1}(\mathcal T_h)$ is 
	\[\bm V_h^{r-1}(K)=\grad S_h^r(K)\oplus \mathfrak{p}\bm W_h(K)\]
	where $S_h^r(K)$ and $\bm W_h(K)$ are shape function spaces of $S_h^r(\mathcal T_h)$ and $\bm W_h(\mathcal T_h)$ and $\mathfrak{p}$ is the Poincar\'e operator with the precise definition given in Sections \ref{section2}.  The DOFs of $\bm V_h^{r-1}(\mathcal T_h)$ can be obtained from the DOFs of $S_h^r(\mathcal T_h)$ and $\bm W_h(\mathcal T_h)$. 
	By taking $r=1,2$, we will get two versions of $\bm V_h^{r-1}(\mathcal T_h)$. The number of DOFs on each element is 24 for $r=1$ and 36 for $r=2$. By constructing $\bm V_h^{r-1}(\mathcal T_h)$ in this way, we can show that the complex \eqref{FE-stokes-complex} is exact on contractible domains. 

    Based on the newly proposed $\bm V_h^{r-1}(\mathcal T_h)$ and $S_h^r(\mathcal T_h)$, we develop a robust mixed finite element method for solving the quad-curl singular perturbation problem \eqref{OriginProblem}. The wellposedness of the numerical scheme  holds by proving the discrete Poincar\'e inequality, the discrete inf-sup condition, and the coercivity condition. Moreover, we prove a special property of functions in 
    $\bm V_h^{r-1}(\mathcal T_h)$:
\begin{align}\label{property}
			\int_{\partial K}\bm  q\cdot (\bm v_h-\bm I_h^{r-1}\bm v_h)\times \bm n_{\partial K}\d A=0, \qquad\text{ for all } \bm  q\in \bm P_{0}(K)\text{ and }K\in\mathcal T_h,
		\end{align}
  where $\bm I_h^{r-1}$ is defined in Section \ref{section3}.
	With the interpolation results, projection error estimates, and property \eqref{property}, we obtain an optimal convergence order $\mathcal{O}(h)$ for any fixed $\epsilon$. As $\epsilon$ approaches $0$, the right-hand side of the optimal estimate will blow up, which makes the estimate useless. Therefore, we also provide a uniform convergence order $\mathcal{O}(h^{1/2})$ for any value of $\epsilon$ in the sense of the energy norm. We note that the optimal estimate for a moderate $\epsilon$ can be obtained without using property \eqref{property}, while the uniform estimate can not. The fully nonconforming finite elements on tetrahedral meshes \cite{Huang2020Nonconforming} do not possess this property, which is the reason that those elements can not work for the quad-curl singular perturbation problem.

    The rest of the paper is organized as follows. In section 2 we list some notation that will be used throughout the paper. In section 3 we define the fully $\grad\curl$-nonconforming finite element on a cube and estimate the interpolation errors. Nonconforming and exact finite element complexes are  constructed in section 4. In section 5 we use our proposed elements to solve the quad-curl singularly perturbed problem and obtain the optimal convergence order of the energy error  for any fixed $\epsilon$.  In section 6 we provide  a uniform error estimate with respect to $\epsilon$. In section 7, 
numerical examples are shown to verify the correctness and efficiency of our method. Finally, some concluding remarks are given in section 8.

	\section{Notation}\label{notation}
	We assume that $\Omega\subset\mathbb{R}^3$ is a contractible Lipschitz domain throughout the paper.  We adopt conventional notation for Sobolev spaces such as $W^{s,p}(D)$ or $W_0^{s,p}(D)$ on a sub-domain $D\subset\Omega$ furnished with the norm $\left\|\cdot\right\|_{W^{s,p}(D)}$ and the semi-norm $\left|\cdot\right|_{W^{s,p}(D)}$. In the case of $p=2$, we use notation $H^s(D)$ or $H_0^s(D)$ for spaces $W^{s,p}(D)$ or $W_0^{s,p}(D)$. The norm and semi-norm are $\|\cdot\|_{s,D}$ and $|\cdot|_{s,D}$, respectively. In the case of $s=0$, the space $H^{0}(D)$ coincides with $L^2(D)$ which is equipped with the inner product $(\cdot,\cdot)_D$ and the norm $\left\|\cdot\right\|_D$. 
	When $D=\Omega$, we drop the subscript $D$. 
	
	
	In addition to the standard Sobolev spaces, we also define
	\begin{align*}
		& H(\text{curl};D):=\{\bm u \in [L^2(D)]^3:\; \curl \bm u \in [L^2(D)]^3\},\\
		&H(\text{div};D):=\{\bm u \in [L^2(D)]^3:\; \div\bm u \in  L^2(D)\},\\
		& H(\grad\curl;D):=\{\bm u \in [L^2(D)]^3:\; \curl  \bm u \in [H^1(D)]^3\},\\
		&H^s(\curl;D):=\{\bm u \in [H^s(D)]^3:\; \curl\bm u \in [H^s(D)]^3\}.
	\end{align*}
	

	
	For a subdomain $D$, we use $P_r(D)$, or simply $P_{r}$ when there is no possible confusion, to denote the space of polynomials with degree at most $r$ on $D$. We denote $Q_{i,j,k}$ the space of polynomials in three variables $(x,y,z)$ whose maximum degrees are $i$ in $x$, $j$ in $y$, and $k$ in $z$, respectively.
	
	For a scalar function space $S$, we use $\bm S$ or $[S]^3$ to denote the $S\otimes\mathbb R^3$.

	

	Let \,$\mathcal{T}_h\,$ be a partition of the domain $\Omega$
	consisting of shape-regular cuboids. For $K=(x_1^c-h_1,x_1^c+h_1)\times (x_2^c-h_2,x_2^c+h_2)\times (x_3^c-h_3,x_3^c+h_3)$, we denote $h_K=\sqrt{h_1^2+h_2^2+h_3^2}$ as the diameter of $K$ and $h=\max_{K\in\mathcal T_h}h_K$ as the mesh size of $\mathcal {T}_h$. Denote by $\mathcal V_h$, $\mathcal E_h$, and $\mathcal F_h$ the sets of vertices, edges, and faces in the partition, and $\mathcal V_h^{\text{int}}$, $\mathcal E_h^{\text{int}}$, and $\mathcal F_h^{\text{int}}$ the sets of interior vertices, edges, and faces. 
	We also denote by $\mathcal V_h(K)$, $\mathcal E_h(K)$, and $\mathcal F_h(K)$ the sets of vertices, edges, and faces related to the element $K\in\mathcal T_h$. We use $\bm\tau_e$ and $\bm n_f$ to denote the unit tangential vector and the unit normal vector to $e\in \mathcal E_h$ and $f\in \mathcal F_h$, respectively. 
	Suppose $f=K_1\cap K_2\in \mathcal F_h$. For a function $v$ defined on $K_1\cup K_2$, we define
	$[\![v]\!]_f=f|_{K_1}-f|_{K_2}$ to be the jump across $f$. When $f\subset \partial \Omega$, $[\![v]\!]_f=v$.
	
	We use $C$ to denote a generic positive constant that is independent of $h$.
	\section{The H(gradcurl)-Nonconforming Finite Elements}\label{section2}
	In this section, we will construct $H(\grad\curl)$-nonconforming finite elements on cubical meshes. To this end, recall the  Poincar{\'e} operator $\mathfrak{p}:[C^{\infty}]^3\to [C^{\infty}]^3$ \cite{christiansen2018generalized} defined by
	\begin{equation*}
		\mathfrak{p} \bm{v}:=-\bm{x}\times\int_{0}^{1}t\bm{v}(t\bm{x}) \d t.
	\end{equation*}
	The Poincar{\'e} operator satisfies
	\begin{align}
	   & \curl \mathfrak{p} \bm{v} + \mathfrak{p}^3\div\bm v= \bm{v},\label{identity-poincare}\\
     & \grad \mathfrak{p}^1 \bm{v}+ \mathfrak{p}\curl\bm v =\bm v,\label{identity-poincare-2}
	\end{align}
	where $\mathfrak{p}^3 v =\bm x\int_0^1t^2v(t\bm x)\d t$ and $\mathfrak{p}^1 \bm v =\int_0^1\bm v(t\bm x)\cdot\bm x\d t$. It holds
 \begin{align}
    \mathfrak{p}^1\mathfrak{p} = \mathfrak{p}\mathfrak{p}^3  =0.
 \end{align}
	
For each element $K\in \mathcal{T}_h$, we define the following shape function space:
	\begin{align}\label{shapefunction}
		\bm{V}^{r-1}_h(K) :=\grad S_h^r(K)\oplus \mathfrak{p}\bm W_h(K)
	\end{align}
	where 		
	\begin{align}\label{WK}
	    \bm W_h(K)= [P_1(K)]^3\oplus \Span\left\{\begin{pmatrix}y^2\\0\\0\end{pmatrix},\begin{pmatrix}z^2\\0\\0\end{pmatrix},\begin{pmatrix}0\\x^2\\0\end{pmatrix},\begin{pmatrix}0\\z^2\\0\end{pmatrix},\begin{pmatrix}0\\0\\x^2\end{pmatrix},\begin{pmatrix}0\\0\\y^2\end{pmatrix}\right\},
	\end{align} 
	and for $r=1,2$, the shape function space $S_h^r(K)$ contains all polynomials of superlinear degree at most $r$. The superlinear degree of a polynomial is the degree with ignoring variables that enter linearly. For example, $x^2yz^3$ is a polynomial with superlinear degree $5$. To be specific, the space $S_h^2(K)$ is spanned by the monomials in $P_2(K)$ and \[xy^2,xz^2,yx^2,yz^2,zx^2,zy^2,xyz,xyz^2,x^2yz,xy^2z.\]
	We have $\dim S_h^1(K)=8$ and $\dim S_h^2(K)=20$. 
	The space $\bm W_h(K)$ is the same as the space $\bm V_T^{(3)}$ defined in \cite{zhang2009low} and $\dim \bm W_h(K) = 18$.
	
The right-hand side of \eqref{shapefunction} is a direct sum. In fact, if $\bm v\in \grad S_h^r(K)\cap\mathfrak{p}\bm W_h(K)$, then $\curl\bm v=0$ and $\mathfrak p^1\bm v =0$. According to \eqref{identity-poincare-2}, we have $\bm v=0$.
From the definition of $\bm{V}^{r-1}_h(K)$, we have $\bm P_{r-1}(K)\subset \bm{V}^{r-1}_h(K)$ and 
	\begin{align*}
		\text{dim}\ \bm{V}^{r-1}_h(K)=\dim \grad S_h^r(K)+\dim \mathfrak p\bm W_h(K) = \left\{\begin{array}{l}
			24,\quad r=1,\\
			36,\quad r=2.
		\end{array}
		\right.
	\end{align*}
We define the following DOFs for $\bm v\in \bm{V}^{r-1}_h(K)$:
	\begin{align}
		M_e(\bm{v})&=\int_e \bm{v}\cdot\bm{\tau}_e q\d s,\ \forall q\in P_{r-1}(e)\ \text{ at all edges }e\in \mathcal E_h(K),\label{Dof2}\\
        M_f(\bm{v})&=\int_f \curl\bm{v}\times \bm{n}_f \d A\  \text{ at all faces } f\in \mathcal F_h(K).\label{Dof1}
	\end{align}
	\begin{remark}\label{remark_freedom}
		The DOFs $M_f(\bm{v})$ can be  equivalently defined as
		\begin{align}
			&M_f^1(\bm{v})=\int_f \curl\bm{v}\cdot\bm{\tau}_f^{1} \d A\  \text{ at all faces } f\in \mathcal F_h(K),\label{Dof1_1}\\
			&M_f^2(\bm{v})=\int_f \curl\bm{v}\cdot\bm{\tau}_f^{2} \d A\  \text{ at all faces } f\in \mathcal F_h(K),\label{Dof1b}
		\end{align}
		where $\bm{\tau}_f^1,\ \bm{\tau}_f^2$ are two unit vectors parallel to the two non-colinear edges in face $f$.
	\end{remark}
	\begin{lemma}\label{boundednessofDOF}
The DOFs $M_e(\bm v)$ and $M_f(\bm v)$ are well-defined for $\bm v\in  H^1(\curl;K)$.
	\end{lemma}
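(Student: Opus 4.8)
The plan is to show that $M_e$ and $M_f$ define bounded linear functionals on $H^1(\curl;K)$; since $[C^\infty(\bar K)]^3$ is dense in $H^1(\curl;K)$, it suffices to prove the continuity estimates $|M_e(\bm v)|+|M_f(\bm v)|\le C(\|\bm v\|_{1,K}+\|\curl\bm v\|_{1,K})$ for smooth $\bm v$, after which the functionals extend by continuity and the edge/face traces they involve are well defined. The face functionals are immediate: because $\curl\bm v\in[H^1(K)]^3$, the standard trace theorem gives $\curl\bm v|_f\in[H^{1/2}(f)]^3\hookrightarrow[L^2(f)]^3$ with $\|\curl\bm v\|_{L^2(f)}\le C\|\curl\bm v\|_{1,K}$ for each $f\in\mathcal F_h(K)$. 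Since $\bm n_f$ is a unit vector, $|M_f(\bm v)|\le|f|^{1/2}\|\curl\bm v\|_{L^2(f)}\le C\|\curl\bm v\|_{1,K}$, and the same bound holds for $M_f^1,M_f^2$ of Remark~\ref{remark_freedom}.

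The edge functionals are the real point. The obstruction is that $\bm v\in[H^1(K)]^3$ yields only $[H^{1/2}(f)]^3$ face traces, and $H^{1/2}(f)$ admits no trace on the one-dimensional edge $e$; thus the $H^1$-regularity of $\bm v$ alone is insufficient and the curl-regularity must be exploited. Fix $e$ and a face $f\in\mathcal F_h(K)$ with $e\subset\partial f$, and set $\bm w:=\bm n_f\times(\bm v\times\bm n_f)|_f$, the tangential trace of $\bm v$ on $f$; as $f$ is planar we have $\bm w\in[H^{1/2}(f)]^2$ and $\bm v\cdot\bm\tau_e|_e=\bm w\cdot\bm\tau_e|_e$. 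The extra ingredient is the surface identity $\curl_f\bm w=(\curl\bm v)\cdot\bm n_f|_f$, whose right-hand side lies in $H^{1/2}(f)$ by the face trace of $\curl\bm v$.

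I would then work entirely on $f$ through a Helmholtz decomposition $\bm w=\grad_f p+\grad_f^{\perp}q$ on the convex, simply connected square $f$. The rotational part solves $-\Delta_f q=\curl_f\bm w\in H^{1/2}(f)$ with a homogeneous boundary condition, so two-dimensional elliptic regularity on the square gives $q\in H^{5/2}(f)$ and hence $\grad_f^{\perp}q\in[H^{3/2}(f)]^2$, which has an edge trace in $[H^1(e)]^2$. For the irrotational part, $\grad_f p=\bm w-\grad_f^{\perp}q\in[H^{1/2}(f)]^2$ forces $p\in H^{3/2}(f)$, so $p|_e\in H^1(e)$ and the tangential derivative $\grad_f p\cdot\bm\tau_e=\partial_{\bm\tau_e}(p|_e)\in L^2(e)$. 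Combining the two parts yields $\bm v\cdot\bm\tau_e|_e\in L^2(e)$ with $\|\bm v\cdot\bm\tau_e\|_{L^2(e)}\le C(\|\bm v\|_{1,K}+\|\curl\bm v\|_{1,K})$, so $|M_e(\bm v)|\le\|\bm v\cdot\bm\tau_e\|_{L^2(e)}\|q\|_{L^2(e)}$ is finite for every $q\in P_{r-1}(e)$.

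The main obstacle is precisely the irrotational part: controlling $\grad_f p\cdot\bm\tau_e$ is the borderline case where $H^{1/2}$ fails to restrict to an edge, and it is rescued only by the observation that a curl-free $[H^{1/2}(f)]^2$ field is the gradient of an $H^{3/2}(f)$ potential, whose edge trace lies in $H^1(e)$ so that merely its $L^2(e)$ tangential derivative is needed. One must also check that the elliptic-regularity step is legitimate on the square face, whose Dirichlet-Laplacian corner exponents exceed $2$ and therefore contribute no singular part degrading the $H^{5/2}$ bound. A shorter but less elementary route uses the splitting $\bm v=\grad\mathfrak p^1\bm v+\mathfrak p\curl\bm v$ from \eqref{identity-poincare-2}: if the Poincar\'e operators are known to gain one Sobolev order, then $\mathfrak p^1\bm v\in H^2(K)$ and $\mathfrak p\curl\bm v\in[H^2(K)]^3$, both of which possess edge traces, and the same conclusion follows at once.
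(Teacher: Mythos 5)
Your proof is correct in its main line, but it takes a genuinely different route from the paper. The paper follows Monk's classical argument (\cite[Lemma 5.38]{Monk2003}): it never attempts to give $\bm v\cdot\bm\tau_e$ a pointwise meaning on $e$; instead it extends $q\in P_{r-1}(e)$ by zero to a function in $W^{1-1/p',p'}(\partial f)$ with $p'<2$, lifts it to $W^{1,p'}(f)$, and integrates by parts on the face, so that only $\curl\bm u|_f\in L^2(f)$ (from the $H^1$ trace theorem) and $\bm u|_f\in L^p(f)$ (from $H^{1/2}(f)\hookrightarrow L^p(f)$, $p>2$) are needed. You instead attack the obstruction head-on: you show that the tangential face trace $\bm w$ of $\bm v$ actually has an $L^2(e)$ edge restriction, via a Helmholtz splitting $\bm w=\grad_f p+\grad_f^{\perp}q$ in which the rotational part is upgraded to $H^{3/2}(f)$ by elliptic regularity for $-\Delta_f q=(\curl\bm v)\cdot\bm n_f\in H^{1/2}(f)$ on the square, and the irrotational part is handled through the potential $p\in H^{3/2}(f)$ whose edge trace lies in $H^1(e)$. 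Both arguments are sound; yours yields the stronger conclusion $\bm v\cdot\bm\tau_e\in L^2(e)$ but pays for it with the corner-regularity discussion on the face (which you correctly flag as needing verification; it works because the Dirichlet singular exponents at a right angle are $2k\ge 2$), whereas the paper's duality argument is shorter and avoids elliptic regularity entirely. One caution on your closing aside: the splitting $\bm v=\grad\mathfrak p^1\bm v+\mathfrak p\curl\bm v$ does give $\mathfrak p^1\bm v\in H^2(K)$ (since $\grad\mathfrak p^1\bm v=\bm v-\mathfrak p\curl\bm v\in\bm H^1(K)$), but the plain Koszul operator $\mathfrak p$ used in this paper maps $\bm H^1$ to $\bm H^1$ and does not in general gain a full Sobolev order (that property belongs to the regularized Poincar\'e operators), so $\mathfrak p\curl\bm v\in[H^2(K)]^3$ is not available and that shortcut does not close as stated.
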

\begin{proof}
The boundedness of the DOFs $M_f(\bm v)$ is trivial. To prove the boundedness of DOFs $M_e(\bm v)$, 
we apply a similar idea to the proof of \cite[Lemma 5.38]{Monk2003}. For an edge $e$, $f$ is the face containing the edge on its boundary. Given a polynomial $q\in P_{r-1}(e)$, we extend it by 0 to a function on $\partial f$ (still denoted by $q$). Such a function $q$ is in $W^{1-1/p^\prime,p^\prime}(\partial f)$ with $p'=p/(p-1)>1$ and  $p>2$. 
Suppose $K$ is an element containing the face $f$ on its boundary, we have
		\begin{align*}
		&\int_e\bm u\cdot \bm \tau q  \d s=\int_f \text{rot}_f \bm u_f q\d A-\int_f\bm u\cdot \curl_f q\d A\\
			&\le\|\curl \bm u\|_{L^2(f)}\|q\|_{L^2(f)}+\|\bm u\|_{L^p(f)}\|\curl_f q\|_{L^{p'}(f)}\\
			&\le\|\curl \bm u\|_{L^2(f)}\|q\|_{W^{1,p'}(f)}+\|\bm u\|_{L^p(f)}\|q\|_{W^{1,p'}(f)}\  (\text{embedding theorem \cite[Theorem 4.57]{demengel2012functional}})\\
			&\le \|\curl \bm u\|_{L^2(f)}\|q\|_{W^{1-1/p',p'}(e)}+\|\bm u\|_{L^p(f)}\|q\|_{W^{1-1/p',p'}(e)}\  \text{(\cite[Theorem 1.5.1.2]{grisvard2011elliptic})}\\
			&\le C(\| \curl\bm u\|_{L^2(f)}+\|\bm u\|_{H^{1/2}(f)}) \ (\text{embedding theorem \cite[Theorem 4.57]{demengel2012functional}})\\
            &\le C(\| \curl\bm u\|_{1,K}+\|\bm u\|_{1,K})\ (\text{trace theorem \cite[Theorem 1.5.1.2]{grisvard2011elliptic}}).
		\end{align*} 
\end{proof}
	\begin{lemma}
		The DOFs \eqref{Dof1}-\eqref{Dof2} are unisolvent for the shape function space $\bm{V}^{r-1}_h(K)$.
	\end{lemma}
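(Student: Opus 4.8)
The plan is to leverage the exactness structure behind \eqref{shapefunction} and reduce unisolvency to a dimension count plus a vanishing argument. First I would check that the number of DOFs in \eqref{Dof1}--\eqref{Dof2} matches $\dim\bm V_h^{r-1}(K)$: a cube has $12$ edges and $6$ faces, $M_e$ supplies $\dim P_{r-1}(e)=r$ conditions per edge while, by Remark \ref{remark_freedom}, $M_f$ supplies two conditions per face, for a total of $12r+12$, i.e.\ $24$ when $r=1$ and $36$ when $r=2$. Hence it suffices to show that a function $\bm v=\grad s+\mathfrak p\bm w\in\bm V_h^{r-1}(K)$ all of whose DOFs vanish must be zero.

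The first key step is to identify $\curl\bm v$ explicitly. Since $\curl\grad s=0$, identity \eqref{identity-poincare} gives $\curl\bm v=\curl\mathfrak p\bm w=\bm w-\mathfrak p^3\div\bm w$. Each of the six quadratic generators in \eqref{WK} is divergence-free and the $[P_1(K)]^3$ part of $\bm w$ has constant divergence, so $\div\bm w$ is a constant and $\mathfrak p^3\div\bm w=\tfrac13(\div\bm w)\,\bm x$. Because $\bm x\in[P_1(K)]^3\subset\bm W_h(K)$, this shows
\begin{align*}
\curl\bm v=\bm w-\tfrac13(\div\bm w)\,\bm x\in\bm W_h(K).
\end{align*}
Next I would reconstruct the full face-average of $\curl\bm v$ from the vanishing DOFs. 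The tangential part is controlled directly, since $M_f(\bm v)=0$ means $\int_f\curl\bm v\times\bm n_f\,\d A=0$. The normal part is controlled by the edges: Stokes' theorem gives $\int_f\curl\bm v\cdot\bm n_f\,\d A=\oint_{\partial f}\bm v\cdot\bm\tau\,\d s$, a signed sum of the quantities $\int_e\bm v\cdot\bm\tau_e\,\d s=M_e(\bm v)$ evaluated at $q\equiv 1$, hence zero. Therefore $\int_f\curl\bm v\,\d A=0$ for every $f\in\mathcal F_h(K)$. Since the componentwise face averages are exactly the DOFs for which $\bm W_h(K)$ is unisolvent \cite{zhang2009low}, and $\curl\bm v\in\bm W_h(K)$, we conclude $\curl\bm v=0$.

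It remains to pass from $\curl\bm v=0$ to $\bm v=0$. From $\curl\mathfrak p\bm w=0$, together with $\mathfrak p^1\mathfrak p\bm w=0$ and identity \eqref{identity-poincare-2}, we obtain $\mathfrak p\bm w=\grad\mathfrak p^1(\mathfrak p\bm w)=0$; this is precisely the injectivity of $\curl$ on $\mathfrak p\bm W_h(K)$ already used for the directness of the sum \eqref{shapefunction}. Hence $\bm v=\grad s$, and the edge DOFs reduce to $\int_e(\partial s/\partial\bm\tau_e)\,q\,\d s=0$ for all $q\in P_{r-1}(e)$. As $s|_e\in P_r(e)$ forces $\partial s/\partial\bm\tau_e|_e\in P_{r-1}(e)$, each edge-tangential derivative vanishes, so $s$ is constant on every edge; since the $1$-skeleton of $K$ is connected and the serendipity space $S_h^r(K)$ is determined by its skeleton DOFs for $r=1,2$, the function $s$ is constant and $\bm v=\grad s=0$.

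The main obstacle is the conceptual step linking the two DOF families through the complex: recognizing that the edge DOFs supply exactly the \emph{normal} face-averages of $\curl\bm v$ (via Stokes' theorem) that are absent from $M_f$, so that $M_e$ and $M_f$ together reproduce the full, unisolvent face-average DOF set of $\bm W_h(K)$. The supporting computation is the verification that $\curl\bm v\in\bm W_h(K)$, which hinges on the divergence-free structure of the six quadratic generators in \eqref{WK}; everything else is a bookkeeping of dimensions and a standard serendipity edge argument.
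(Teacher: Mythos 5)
Your proof is correct and follows essentially the same route as the paper: a dimension count, then deducing $\int_f\curl\bm v\,\d A=0$ from the face DOFs plus Stokes' theorem on the edge DOFs, invoking the unisolvence of $\bm W_h(K)$ to get $\curl\bm v=0$, and finally killing the remaining gradient part via the edge DOFs and the serendipity structure. The only (welcome) difference is that you spell out why $\curl\bm v\in\bm W_h(K)$ by computing $\mathfrak p^3\div\bm w=\tfrac13(\div\bm w)\bm x$, a step the paper asserts without detail.
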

	\begin{proof}
		First of all, the number of DOFs \eqref{Dof1}-\eqref{Dof2} is $6\times2+12r$ which is same as the dimension of $\bm{V}^{r-1}_h(K)$. Then it suffices to show that if all the DOFs \eqref{Dof1}-\eqref{Dof2} vanish on a function $\bm{v}=\grad q+ \mathfrak p \bm w\in \bm{V}^{r-1}_h(K)$ with $q\in S_h^r(K)$ and $\bm w \in \bm W_h(K)$, then $\bm{v}=0$.
		
		For each $f\in\mathcal{F}_h(K)$, by the integration by parts on face $f$, we have
		\begin{align}
			\int_f \curl \bm{v}\cdot\bm{n}_f \d A=\int_{\partial f}\bm{v}\cdot \bm{\tau}\d s=0,
		\end{align}
	 which together with vanishing DOFs in \eqref{Dof1}, we obtain
		\begin{align}
			\int_f \curl \bm{v}\d A=\bm{0}.
		\end{align}
		By \eqref{identity-poincare} and the definition of $\bm{V}^{r-1}_h(K)$, we have $\curl\bm{v}\in \bm{W}_h(K)$. 
		The unisolvence of the DOFs for $\bm{W}_h(K)$ \cite{zhang2009low} leads to $\text{curl }\bm{v}=\bm{0}$, which implies $\curl\mathfrak p \bm w=\bm w-{\mathfrak p}^3\div\bm w=0$. Then $\bm{v}=\grad q+\mathfrak p \bm w = \grad q+\mathfrak p {\mathfrak p}^3\div\bm w=\grad q$. Using the DOFs \eqref{Dof2}, we can obtain $\partial_{\bm{\tau}_e}q=0$, then we can choose $q\in S^r_h(K)$ satisfying $q|_e=0$ for each $e\in\mathcal{E}(K)$. Since the superlinear degree of $q$ is no more than $2$, we can get $\bm{v}=0$.
	\end{proof}
	
	
For the reference element $\hat K=(-1,1)^3$, we denote the DOFs for $\hat{\bm v}\in\bm{V}^{r-1}_h(\hat K)$ by
\[M_{\hat{f}}(\hat{\bm{v}})\cup M_{\hat{e}}(\hat{\bm{v}}).\]
We relate the shape function $\bm v$ on $K$ and  $\hat{\bm v}$ on $\hat K$ by  
	\begin{align}\label{tran}
		\bm{v}\circ F_K =B_K^{-T}\hat{\bm{v}},
	\end{align}
	where $F_K(\bm{x}):=B_K\hat{\bm{x}}+\bm{b}_K$ is a affine mapping from reference element $\hat{K}$ to an element $K\in\mathcal T_h$. For technical reasons, we assume the elements in $\mathcal T_h$ have all edges parallel to the coordinates axes, then $B_K$ is a diagonal matrix.


	 	For any $K\in\mathcal{T}_h$, we  define an interpolation operator $\bm{R}^{r-1}_K:H^1(\curl;K)\rightarrow \bm V_h^{r-1}(K)$ by
	\begin{align}
		M_e(\bm{v}-\bm{R}^{r-1}_K\bm{v})=0, M^1_f(\bm{v}-\bm{R}^{r-1}_K\bm{v}), \text{ and } M^2_f(\bm{v}-\bm{R}^{r-1}_K\bm{v})=0.
	\end{align}
	Similarly, we can define $\bm{R}^{r-1}_{\hat K}$.
	The following lemma establishes the relationship between the interpolation on $K$ and the interpolation on $\hat{K}$.  

	\begin{lemma}\label{hatPi}
		Suppose that $\bm{R}^{r-1}_K \bm{v}$ is well defined. Then under the transformation \eqref{tran}, we have
		\begin{align*}
			\widehat{\bm{R}^{r-1}_K \bm{v}}=\bm{R}^{r-1}_{\hat K}\hat{\bm{v}}.
		\end{align*}
	\end{lemma}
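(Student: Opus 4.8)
The plan is to verify the commuting relation $\widehat{\bm R_K^{r-1}\bm v}=\bm R_{\hat K}^{r-1}\hat{\bm v}$ by showing that both sides satisfy the same defining conditions on the reference element. Since $\bm R_{\hat K}^{r-1}\hat{\bm v}$ is the unique element of $\bm V_h^{r-1}(\hat K)$ whose edge DOFs $M_{\hat e}$ and face DOFs $M_{\hat f}$ agree with those of $\hat{\bm v}$, it suffices to prove that the pullback $\widehat{\bm R_K^{r-1}\bm v}$ (which already lies in $\bm V_h^{r-1}(\hat K)$ by the affine-invariance of the shape-function space under the covariant transform \eqref{tran}) reproduces the DOFs of $\hat{\bm v}$, i.e. that
\begin{align*}
M_{\hat e}\!\left(\widehat{\bm R_K^{r-1}\bm v}\right)=M_{\hat e}(\hat{\bm v}),\qquad
M_{\hat f}\!\left(\widehat{\bm R_K^{r-1}\bm v}\right)=M_{\hat f}(\hat{\bm v}).
\end{align*}
Unisolvence then forces the two functions to coincide.

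First I would establish how the two families of DOFs transform under $F_K$. The edge DOFs $M_e(\bm w)=\int_e \bm w\cdot\bm\tau_e\,q\,\d s$ involve the covariant (tangential) trace, which is exactly what the Piola-type map \eqref{tran} with $B_K^{-T}$ is built to preserve: a direct change of variables $\bm x=F_K(\hat{\bm x})$ shows that $\int_e(\bm w\circ F_K)\cdot\bm\tau_e\,q\,\d s$ equals, up to the (constant, since $B_K$ is diagonal) scaling Jacobian factors, the corresponding integral $\int_{\hat e}\hat{\bm w}\cdot\hat{\bm\tau}_{\hat e}\,\hat q\,\d\hat s$ over the reference edge, with $\hat q\in P_{r-1}(\hat e)$ pulled back from $q\in P_{r-1}(e)$. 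For the face DOFs I would use the standard identity relating the curl of a covariantly transformed field to the pullback of the curl, namely $\curl(B_K^{-T}\hat{\bm v}\circ F_K^{-1})=\tfrac{1}{\det B_K}\,B_K(\widehat{\curl}\,\hat{\bm v})\circ F_K^{-1}$, so that $M_f^i(\bm w)=\int_f\curl\bm w\cdot\bm\tau_f^i\,\d A$ again matches the reference face DOF $M_{\hat f}^i(\hat{\bm w})$ after accounting for the diagonal scaling. Using Remark \ref{remark_freedom}, working with $M_f^1,M_f^2$ rather than the vector DOF $M_f$ makes this tangential-curl computation cleanest.

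The key observation is that because the DOFs of $\bm v$ and its interpolant $\bm R_K^{r-1}\bm v$ agree on $K$ by definition, and because each physical DOF differs from its reference counterpart only by a fixed, nonzero scaling constant depending on $B_K$ (the diagonality of $B_K$ is what guarantees these factors are clean and that tangents/edges map to tangents/edges), the equalities
\begin{align*}
M_{\hat e}\!\left(\widehat{\bm R_K^{r-1}\bm v}\right)
= c_e\,M_e(\bm R_K^{r-1}\bm v)
= c_e\,M_e(\bm v)
= M_{\hat e}(\hat{\bm v})
\end{align*}
follow immediately, and likewise for the face DOFs. Invoking the unisolvence lemma for $\bm V_h^{r-1}(\hat K)$ then yields the claim.

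The main obstacle I anticipate is the bookkeeping of the transformation of the face DOFs: verifying the covariant-curl identity and confirming that the tangential components $\bm\tau_f^1,\bm\tau_f^2$ together with the area element transform consistently, so that the scaling constants genuinely cancel and the reference-face DOFs are hit exactly. The edge DOFs are essentially the defining invariance property of the $H(\curl)$ covariant map and should be routine; the diagonality assumption on $B_K$ (edges parallel to the axes) is what keeps the face computation manageable, since it prevents the mixing of components that a general $B_K$ would introduce, and I would lean on it explicitly at that step.
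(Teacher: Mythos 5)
Your proposal is correct and follows essentially the same route as the paper: both arguments show that under the covariant map \eqref{tran} each physical DOF (edge tangential moments and the tangential-curl face moments of Remark \ref{remark_freedom}) equals a fixed nonzero constant times its reference counterpart, using the diagonality of $B_K$ and the curl transformation identity, and then conclude by unisolvence that the two interpolants coincide (the paper packages this last step as a citation of \cite[Proposition 3.4.7]{brenner2008mathematical}). No gap.
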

	\begin{proof}
	The interpolations $\widehat{\bm{R}^{r-1}_K \bm{v}}$ and $\bm{R}^{r-1}_{\hat K}\hat{\bm{v}}$ are defined by DOFs $M_e(\bm v)\cup M_f^1(\bm v)\cup M_f^2(\bm v)$ and $M_{\hat e}(\hat{\bm v})\cup M_{\hat f}^1(\hat{\bm v})\cup M_{\hat f}^2(\hat{\bm v})$, respectively. Under the transformation \eqref{tran} , each DOF in $M_e(\bm v)\cup M_f^1(\bm v)\cup M_f^2(\bm v)$ is a constant multiple of the corresponding DOF in  $M_{\hat e}(\hat{\bm v})\cup M_{\hat f}^1(\hat{\bm v})\cup M_{\hat f}^2(\hat{\bm v})$. For example
\begin{align*}
			&\quad\int_f\curl\bm{u}\cdot \bm{\tau}_f^1 \d A=\int_{\hat{f}}\frac{1}{\det(B_K)}B_K\widehat{\curl}\hat{\bm{u}}\cdot \hat{\bm{\tau}}_f^1 \frac{\text{area}(f)}{\text{area}(\hat{f})}\d \hat{A}\\
			&=\frac{\|B_K\hat{\bm{\tau}}_f^1\|}{\|B_K\hat{\bm n}_f\|}\int_{\hat{f}}\widehat{\curl}\hat{\bm{u}}\cdot \hat{\bm{\tau}}_f^1 \d \hat{A},
		\end{align*}
		where we have used the facts that $B_K$ is a diagonal matrix and $\hat{\bm{\tau}}_f^1={\bm{\tau}}_f^1$.
		Therefore the two interpolations are identical according to \cite[Proposition 3.4.7]{brenner2008mathematical}. 
	\end{proof}
With the help of Lemma \ref{hatPi}, we can show that the interpolation operator has the following approximation property.
	
	\begin{theorem}\label{Interp_err_R}
		Suppose that $\bm{u}\in \bm H^{s}(K)$ and $\curl\bm{u}\in \bm H^{l}(K)$ with $l\geq s\geq 1$, then the following error estimates hold.
		\begin{align}
			\|\bm{u}-\bm{R}^{r-1}_K\bm{u}\|_K&\leq Ch^{\min\{s,r\}}\big(|\bm{u}|_{s,K}+|\curl\bm{u}|_{s,K}\big),\label{Interp_err_R0}\\
			\|\curl(\bm{u}-\bm{R}^{r-1}_K\bm{u})\|_{j,K}&\leq Ch^{\min\{l,2\}-j}|\curl\bm{u}|_{l,K}, \ j=0,1.\label{Interp_err_R1}
		\end{align}
	\begin{proof}
	    By following the proof of \cite[Theorem 5.41]{Monk2003}, we can prove \eqref{Interp_err_R0} and \eqref{Interp_err_R1} by using Lemmas \ref{boundednessofDOF} and \ref{hatPi}, the fact that $\bm P_{r-1}(K)\subset \bm V_h^{r-1}(K)$, and the approximation property of $\bm W_h(K)$ \cite[(3.3)]{zhang2009low}. 
	\end{proof}
	\end{theorem}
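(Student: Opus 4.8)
The plan is to reduce the problem to the reference element $\hat K$ via the transformation \eqref{tran} and Lemma \ref{hatPi}, and then invoke a standard Bramble--Hilbert argument, exactly in the spirit of \cite[Theorem 5.41]{Monk2003}. The key observation enabling this is that $\bm R_K^{r-1}$ commutes with the pullback $\hat{(\cdot)}$ (Lemma \ref{hatPi}), so that $\widehat{\bm u - \bm R_K^{r-1}\bm u} = \hat{\bm u} - \bm R_{\hat K}^{r-1}\hat{\bm u}$ and we may work entirely on the fixed element $\hat K$, paying only the price of the scaling factors coming from $B_K$.

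First I would establish the estimate on the reference element. Lemma \ref{boundednessofDOF} shows that each DOF $M_{\hat e}$, $M_{\hat f}$ is bounded by $\|\hat{\bm u}\|_{1,\hat K} + \|\widehat{\curl}\,\hat{\bm u}\|_{1,\hat K}$, so the interpolation operator $\bm R_{\hat K}^{r-1}\colon H^1(\curl;\hat K)\to \bm V_h^{r-1}(\hat K)$ is bounded, and since $\bm P_{r-1}(\hat K)\subset \bm V_h^{r-1}(\hat K)$ the operator preserves polynomials of degree $r-1$. The Bramble--Hilbert lemma then yields, for $1\le s\le r$,
\begin{align*}
\|\hat{\bm u} - \bm R_{\hat K}^{r-1}\hat{\bm u}\|_{\hat K} \le C\big(|\hat{\bm u}|_{s,\hat K} + |\widehat{\curl}\,\hat{\bm u}|_{s,\hat K}\big).
\end{align*}
For the curl estimate \eqref{Interp_err_R1} I would instead use that $\widehat{\curl}\,\bm R_{\hat K}^{r-1}\hat{\bm u}$ reproduces $\bm W_h(\hat K)$ and invoke the approximation property of $\bm W_h$ from \cite[(3.3)]{zhang2009low}, combined with $\widehat{\curl}$ commuting with the chosen pullback. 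The two cases $j=0,1$ are handled together because $\widehat{\curl}\,\hat{\bm u}\in \bm H^l(\hat K)$ allows approximation up to polynomial degree $\min\{l,2\}$, reflecting that $\curl \bm V_h^{r-1} = \bm W_h \supset \bm P_1$.

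Next I would scale back to $K$. Under \eqref{tran} with diagonal $B_K$, the standard scaling inequalities give $\|\bm v\|_K \lesssim |\det B_K|^{1/2}\|B_K^{-1}\|\,\|\hat{\bm v}\|_{\hat K}$ and $|\hat{\bm u}|_{s,\hat K} \lesssim |\det B_K|^{-1/2}\|B_K\|^{?}\,h_K^{s}|\bm u|_{s,K}$, with the analogous estimates for $\widehat{\curl}\,\hat{\bm u}$ obtained from the covariant transformation of $\curl$. Collecting the powers of $h_K$ using shape-regularity ($\|B_K\|\,\|B_K^{-1}\|\lesssim 1$) produces the factors $h^{\min\{s,r\}}$ and $h^{\min\{l,2\}-j}$ in \eqref{Interp_err_R0}--\eqref{Interp_err_R1}.

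I expect the main obstacle to be bookkeeping the precise powers of $h_K$ (and the interplay of $\det B_K$, $\|B_K\|$, $\|B_K^{-1}\|$) that accompany the covariant transformation of both $\bm u$ and $\curl\bm u$, since the curl of a covariantly-mapped field transforms with an extra factor of $1/\det B_K$ and an application of $B_K$, and one must verify these conspire to give exactly the stated exponents. This is routine given the diagonal structure of $B_K$ and shape-regularity, but it is where all the care lies; the conceptual content is already contained in Lemmas \ref{boundednessofDOF} and \ref{hatPi} together with the polynomial reproduction property $\bm P_{r-1}(K)\subset\bm V_h^{r-1}(K)$ and the approximation property of $\bm W_h(K)$.
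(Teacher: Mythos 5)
Your proposal is correct and follows essentially the same route as the paper, which likewise reduces to the reference element via Lemma \ref{hatPi}, uses the DOF boundedness of Lemma \ref{boundednessofDOF} together with $\bm P_{r-1}(K)\subset \bm V_h^{r-1}(K)$ in a Bramble--Hilbert argument modeled on \cite[Theorem 5.41]{Monk2003}, and handles the curl estimate through the approximation property of $\bm W_h(K)$ from \cite[(3.3)]{zhang2009low}. The scaling bookkeeping you flag is exactly the routine part the paper leaves implicit.
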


	\section{Nonconforming finite element Stokes complexes}\label{section3}
	\subsection{A finite element de Rham complex on cubical meshes}\label{derhamFEM}
	In this section, we present the precise definition of each space involved in the complex \eqref{FE-stokes-complex}.

	\paragraph{{\bf Space $S_h^r(\mathcal T_h)$}}This is the $H^1$-conforming serendipity finite element space \cite{arnold2011serendipity}. 
	The shape function space is $S_h^r(K)$ and the DOFs for $u\in S_h^r(\mathcal T_h)$ are
	\begin{itemize}
		\item function values $u(v)$ at all vertices $v\in\mathcal V_h$,
		\item moments $\int_e uq\d s,\ \forall q\in P_{r-2}(e)$ at all edges $e\in \mathcal E_h$.
	\end{itemize}
	The above DOFs lead to the canonical interpolation $\pi_h^r$: $H^{\frac{3}{2}+\delta}(\Omega)\rightarrow S^r_h(\mathcal T_h)$ with $\delta>0$. 
	\paragraph{{\bf Space $\bm V^{r-1}_h(\mathcal T_h)$}}The space is defined by
	\begin{align*}
	    \bm V^{r-1}_h(\mathcal T_h)=\{\bm u\in\bm L^2(\Omega): &\ \bm u|_{K}\in \bm V^{r-1}_h(K)\text{ for all }K\in\mathcal T_h  \text{ and }\bm u\text{ is uni-valued}\\&\text{ across elements on the DOFs in } M_e(\bm u)\cup M_f(\bm u)\}.
	\end{align*}
		We define a global interpolation operator $\bm{R}^{r-1}_h:\bm H^1(\curl;\Omega)\rightarrow \bm{V}_h^{r-1}(\mathcal T_h)$ 
	element-wisely by
	\begin{align*}
		(\bm{R}^{r-1}_h\bm{u})|_K=\bm{R}^{r-1}_K(\bm{u}|_K)\quad\text{for any }K\in\mathcal{T}_h.
	\end{align*}
	\paragraph{{\bf Space $\bm W_h(\mathcal T_h)$}}For $\bm W_h(\mathcal T_h)$, we use the $\bm H^1$-nonconforming finite element space in \cite{zhang2009low}, which was constructed for the Darcy-Stokes-Brinkman problem. 
 	The shape function space is $\bm W_h(K)$ defined by \eqref{WK}.
	
	The DOFs for $\bm u\in \bm W_h(\mathcal T_h)$ are
	\begin{itemize}
		\item moments $\int_f \bm u\d A$ at all faces $f\in \mathcal F_h$.
	\end{itemize}
	Applying the above DOFs, we can define an interpolation $\bm \Pi_h: \bm H^1(\Omega)\rightarrow\bm W_h(\mathcal T_h)$, whose restriction on $K$ is denoted as $\bm \Pi_K$.
	
	\paragraph{{\bf Space $Q_h(\mathcal T_h)$}}
	The shape function space of $Q_h(\mathcal T_h)$ on $K$ is $P_{0}(K)$. For $ u\in Q_h(\mathcal T_h)$, the DOFs are
	\begin{itemize}
		\item moments $\int_K u\d V$ at all elements $K\in \mathcal T_h$.
	\end{itemize}
	The above DOFs lead to the canonical interpolation $\mathcal P_h$: $L^2(\Omega)\rightarrow Q_h(\mathcal T_h)$, whose restriction on $K$ is denoted as $\mathcal P_K$.

 We summarize the interpolations defined by the DOFs in the following diagram.
	\begin{equation}\label{FE-interp-complex-digram}
		\begin{tikzcd}
			0\arrow{r}{}& \mathbb R\arrow{r}{\subset}& H^{2}(\Omega)\arrow{d}{\pi_h^r} \arrow{r}{\grad} & H^1(\curl;\Omega)\arrow{d}{\bm R_h^{r-1}}\arrow{r}{\curl} &  \bm H^1(\Omega) \arrow{d}{\bm \Pi_h}\arrow{r}{\div} &  L^2(\Omega) \arrow{d}{\mathcal P_h}\arrow{r}{} & 0\\
			0\arrow{r}{}& \mathbb R\arrow{r}{\subset}& S_h^r(\mathcal T_h) \arrow{r}{\grad} & \bm V^{r-1}_h(\mathcal T_h)\arrow{r}{\curl_h} &  \bm W_h(\mathcal T_h) \arrow{r}{\div_h} &  Q_h(\mathcal T_h) \arrow{r}{} & 0
		\end{tikzcd}
	\end{equation}
	\begin{lemma}\label{commuting}
		The interpolations in the diagram \eqref{FE-interp-complex-digram} commute with the differential operators, i.e.,
		\begin{align*}
			\bm R_h^{r-1}\grad  = \grad \pi_h^r, \ \	\bm \Pi_h\curl = \curl_h\bm R_h^{r-1}, \ \ \text{and}   \  \ \mathcal P_h\div=\div_h\bm\Pi_h.
		\end{align*}
	\end{lemma}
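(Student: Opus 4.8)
The plan is to verify each of the three identities separately, and in every case to reduce the equality of two finite element functions to the equality of their degrees of freedom (DOFs), invoking the unisolvence results already established. For the $i$-th identity this amounts to checking that the defining DOFs of the target space, evaluated on the interpolant of a derivative, agree with those evaluated on the derivative of an interpolant; the verification then rests on integration by parts together with the defining moments (vertex, edge, face, element) of the individual interpolations $\pi_h^r$, $\bm R_h^{r-1}$, $\bm \Pi_h$, $\mathcal P_h$. First I would record the range statements implicit in the diagram, namely $\grad S_h^r(K)\subset\bm V_h^{r-1}(K)$ by construction, $\curl\bm V_h^{r-1}(K)\subset\bm W_h(K)$ as established in the unisolvence proof, and $\div\bm W_h(K)\subset P_0(K)$; these guarantee that each composite on the left lands in the correct discrete space.

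For $\bm R_h^{r-1}\grad = \grad\pi_h^r$, take $u\in H^2(\Omega)$ and note $\grad\pi_h^r u\in\bm V_h^{r-1}(\mathcal T_h)$ by the range remark, so by unisolvence it suffices to match $M_e$ and $M_f$. The face DOFs vanish on both sides since $\curl\grad(\cdot)=0$. For the edge DOFs, $M_e(\bm R_h^{r-1}\grad u)=M_e(\grad u)=\int_e\partial_{\bm\tau_e}u\,q\,\d s$, so I must show $\int_e\partial_{\bm\tau_e}(\pi_h^r u-u)\,q\,\d s=0$ for every $q\in P_{r-1}(e)$. Integrating by parts along $e$, the boundary term drops out because $\pi_h^r u$ matches $u$ at the vertices, and the remaining integral pairs $\pi_h^r u-u$ against $\partial_{\bm\tau_e}q\in P_{r-2}(e)$, which vanishes by the edge-moment DOFs of the serendipity interpolation.

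The middle identity $\bm \Pi_h\curl=\curl_h\bm R_h^{r-1}$ is the delicate one. Both sides lie in $\bm W_h(\mathcal T_h)$ (the left one because $\bm\Pi_h$ maps into it, the right one because $\curl\bm R_K^{r-1}\bm u\in\bm W_h(K)$), so by unisolvence of $\bm W_h(K)$ I only need $\int_f\curl\bm u\,\d A=\int_f\curl\bm R_K^{r-1}\bm u\,\d A$ for each face $f\subset\partial K$. The point is that the vector-valued face average $\int_f\curl\bm v\,\d A$ splits into a tangential part, controlled directly by $M_f(\bm v)=\int_f\curl\bm v\times\bm n_f\,\d A$, and a normal part $\int_f\curl\bm v\cdot\bm n_f\,\d A$, which by Stokes' theorem equals $\int_{\partial f}\bm v\cdot\bm\tau\,\d s$ and is therefore controlled by the edge DOFs $M_e$ (with $q\equiv 1\in P_{r-1}(e)$). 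Since $\bm R_K^{r-1}$ preserves both $M_f$ and $M_e$, both components agree, giving the claimed face-average equality; the same computation, read across an interelement face, shows $\curl_h\bm R_h^{r-1}\bm u$ is single-valued and hence genuinely belongs to $\bm W_h(\mathcal T_h)$. I expect this decomposition into tangential and normal parts, together with the care needed to see that $M_f$ captures only the tangential information while Stokes' theorem supplies the rest, to be the main obstacle.

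Finally, for $\mathcal P_h\div=\div_h\bm \Pi_h$ with $\bm w\in\bm H^1(\Omega)$, both sides are piecewise constants, so it suffices to compare element averages. On each $K$, $\int_K\mathcal P_h\div\bm w\,\d V=\int_K\div\bm w\,\d V$ since $\mathcal P_K$ is the $L^2$-projection onto constants, while $\int_K\div\bm\Pi_K\bm w\,\d V=\int_{\partial K}\bm\Pi_K\bm w\cdot\bm n\,\d A=\sum_{f\subset\partial K}\int_f\bm w\cdot\bm n_f\,\d A=\int_K\div\bm w\,\d V$ by the divergence theorem and the face-moment definition of $\bm\Pi_K$. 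This closes the diagram.
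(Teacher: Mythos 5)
Your proof is correct and follows the standard route the paper itself intends: the paper's proof is a one-line deferral to the analogous argument in \cite{hu2020simple}, which is precisely the DOF-matching/unisolvence verification (with Stokes' theorem supplying the normal component of the face average) that you carry out in full. Nothing in your argument departs from that approach, so no further comparison is needed.
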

	\begin{proof}
		It is straightforward to prove the lemma by following the argument in the proof of \cite[Lemma 4.5]{hu2020simple}.
	\end{proof}
	\begin{lemma}\label{exactness1}
		The complex \eqref{FE-stokes-complex} is exact when $\Omega$ is contractible. 
	\end{lemma}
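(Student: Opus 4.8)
The plan is to combine a global dimension count with direct verifications of exactness at all but one node, and then to recover the remaining node from an Euler-characteristic identity. First I would record that \eqref{FE-stokes-complex} is indeed a complex: since $\grad S_h^r(K)\subset \bm V_h^{r-1}(K)$ and $\curl\grad=0$ we get $\curl_h\grad=0$, and since $\curl(\bm v|_K)\in\bm W_h(K)$ (as observed in the unisolvence argument) while $\div\curl=0$ we get $\div_h\curl_h=0$. That the relevant maps land in the \emph{global} spaces, i.e.\ that the inter-element DOFs $M_e$ and $M_f$ stay single-valued under $\grad$ and $\curl_h$, is exactly the content of the commuting diagram, Lemma \ref{commuting}.

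Next I would compute the global dimensions in terms of the numbers $N_V,N_E,N_F,N_T$ of vertices, edges, faces, and elements: $\dim S_h^r(\mathcal T_h)=N_V+(r-1)N_E$, $\dim \bm V_h^{r-1}(\mathcal T_h)=rN_E+2N_F$, $\dim \bm W_h(\mathcal T_h)=3N_F$, and $\dim Q_h(\mathcal T_h)=N_T$. Since $\Omega$ is contractible, the cubical mesh satisfies the Euler relation $N_V-N_E+N_F-N_T=1$, so the alternating sum of the dimensions of all spaces in \eqref{FE-stokes-complex} (including the leading $\mathbb R$) is $1-\bigl(N_V+(r-1)N_E\bigr)+\bigl(rN_E+2N_F\bigr)-3N_F+N_T=1-(N_V-N_E+N_F-N_T)=0$. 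For any complex of finite-dimensional spaces this alternating sum equals the alternating sum of the dimensions of the cohomology spaces; hence once the sequence is known to be a complex, it suffices to establish exactness at every node but one, and exactness at the last node is then automatic.

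The easy nodes I would dispatch as follows: injectivity of $\mathbb R\hookrightarrow S_h^r(\mathcal T_h)$ is clear; exactness at $S_h^r$ means $\ker\grad=\mathbb R$, which holds because $S_h^r(\mathcal T_h)\subset H^1(\Omega)$ and $\Omega$ is connected; and surjectivity of $\div_h$ onto $Q_h$ is the stability of the nonconforming Stokes pair $(\bm W_h,Q_h)$ established in \cite{zhang2009low}. The genuine work is exactness at $\bm V_h^{r-1}$. Given $\bm v\in\bm V_h^{r-1}(\mathcal T_h)$ with $\curl_h\bm v=0$, I would use the local decomposition $\bm v|_K=\grad q_K+\mathfrak p\bm w_K$ together with $\curl\mathfrak p\bm w_K=\bm w_K-\mathfrak p^3\div\bm w_K$ from \eqref{identity-poincare} and the identity $\mathfrak p\mathfrak p^3=0$ to deduce first $\bm w_K=\mathfrak p^3\div\bm w_K$ and then $\mathfrak p\bm w_K=0$, so that $\bm v|_K=\grad q_K$ with $q_K\in S_h^r(K)$, each $q_K$ determined up to an additive constant.

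The remaining and principal obstacle is to glue the local potentials $q_K$ into a single $s_h\in S_h^r(\mathcal T_h)$ with $\bm v=\grad s_h$. Because $\curl\bm v=0$, the face DOFs carry no information, and the inter-element matching of $\bm v$ is encoded entirely in the single-valuedness of the edge DOFs $M_e(\bm v)=\int_e\partial_{\bm\tau_e}q_K\,q\,\d s$ with $q\in P_{r-1}(e)$. I would argue that, for $r=1$, taking $q\equiv1$ gives $q_K(\text{end})-q_K(\text{start})$, so these jumps of the vertex values match across shared edges; and for $r=2$, the second moment, after integration by parts along the edge, additionally controls $\int_e q_K\,\d s$. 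These are precisely the quantities (vertex values and edge moments) that define continuity in $S_h^r(\mathcal T_h)$. Fixing the free constants one element at a time over the connected mesh therefore produces a continuous $s_h\in S_h^r(\mathcal T_h)$ with $s_h|_K=q_K$, so $\bm v=\grad s_h$ and $\ker\curl_h=\grad S_h^r(\mathcal T_h)$. With exactness verified at $\mathbb R$, $S_h^r$, $\bm V_h^{r-1}$, and $Q_h$, the Euler-characteristic identity of the second paragraph forces exactness at $\bm W_h$, which completes the proof.
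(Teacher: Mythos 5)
Your proposal is correct and follows essentially the same route as the paper: local potentials glued via the single-valued edge DOFs for exactness at $\bm V_h^{r-1}$, standard arguments at the first two nodes and at $Q_h$, and a dimension count with Euler's formula to force exactness at $\bm W_h$. The only (immaterial) differences are that you source the surjectivity of $\div_h$ from the inf-sup stability of the pair $(\bm W_h,Q_h)$ rather than from the commuting diagram with the smooth complex, and you phrase the gluing step via matching the vertex/edge-moment DOFs of $S_h^r$ instead of the paper's face-by-face argument with 2D serendipity traces.
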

	\begin{proof}
		To prove the complex is exact, we shall show that the complex is exact at each space. We start with $\bm V_h^{r-1}(\mathcal T_h)$. We suppose $\bm u_h\in \bm V_h^{r-1}(\mathcal T_h)$ and $\curl_h\bm u_h=0$, and we show that there exists a $p_h\in S_h^r(\mathcal T_h)$ such that $\bm u_h=\grad p_h$. Since $\curl_h\bm u_h=0$, we can write $\bm u_h|_K=\grad p_K$ with $p_K\in S_h^r(K)$ for any $K\in \mathcal T_h$. Suppose $f\in \mathcal F_h$ is a common face of $K_1,K_2\in \mathcal T_h$. Restricted on $f$, the polynomials $p_{K_1}$ and $p_{K_2}$ are in $S_h^{r}(f)$ (the counterpart of $S_h^r(K)$ in 2D). The DOFs $\int_e\bm u_h\cdot\bm \tau_e q \d s$ for any $q\in P_{r-1}(e)$ and $e\in\mathcal E_h(f)$ yield $\grad p_{K_1}= \grad p_{K_2}$ on each $e\in\mathcal E_h(f)$, and hence $p_{K_1}-p_{K_2}=c_i$ for some $c_i$ on $e_i$, $i=1,2,3,4$. Restricting to four vertices of $f$ leads to $c_1=c_2=c_3=c_4=c$, and hence $p_{K_1}=p_{K_2}+c$ on $f$. We then pick a $c$ such that $p_{K_1}=p_{K_2}$. Therefore, we can glue together all $p_K$ for $K\in\mathcal T_h$ to get a function $p_h\in S_h^r(\mathcal T_h)$. 
		The exactness at  $Q_h(\mathcal T_h)$ follows from the exactness at the same position of smooth Stokes complex and the commutativity of interpolations and $\div_h$ (Lemma \ref{commuting})\cite{zhang2022nfecubicalmeshes}. 
		
		Finally, we prove the exactness at $\bm W_h(\mathcal T_h)$ by a dimension count. Let $\mathcal K_h$ be the number of elements in $\mathcal T_h$. Then
		\begin{align*}
			&\dim S_h^r(\mathcal T_h) = \mathcal V_h+(r-1)\mathcal E_h,
			&\dim \bm V_h^{r-1}(\mathcal T_h) = r\mathcal E_h+2\mathcal F_h,\\
			&\dim\bm W_h(\mathcal T_h) = 3\mathcal F_h,
			&\dim Q_h(\mathcal T_h) = \mathcal K_h,
		\end{align*}
		which together with Euler's formula leads to
		\begin{align*}
			\dim S_h^r(\mathcal T_h)-\dim \bm{V}^{r-1}_h(\mathcal T_h)+\dim \bm W_h(\mathcal T_h)-\dim  Q_h(\mathcal T_h)=\mathcal V_h-\mathcal E_h+\mathcal F_h-\mathcal K_h=0.
		\end{align*}
	\end{proof}

	Define
	\begin{align*}
		\mathring S_h^r(\mathcal T_h)&=S_h^r(\mathcal T_h)\cap H_0^1(\Omega),\ \ 
		\mathring{Q}_h(\mathcal T_h)= Q_h(\mathcal T_h)\cap L_0^2(\Omega),\\
		\mathring{\bm V}^{r-1}_h(\mathcal T_h)&=\{\bm u\in {\bm V}_h^{r-1}(\mathcal T_h): \text{the DOFs \eqref{Dof2} and \eqref{Dof1} vanish on }\partial\Omega\},\\
		\mathring{\bm W}_h(\mathcal T_h)&=\{\bm u\in {\bm W}_h(\mathcal T_h): \text{the DOFs $\int_f \bm u\d A$ vanish for  }f\subset \partial\Omega\}.
	\end{align*}
	Proceeding as the proof of Lemma \ref{exactness1}, we can prove the following exactness of the complex with vanishing boundary DOFs. 
	\begin{lemma}\label{exactness}
		The complex
		\begin{equation}\label{FE-stokes-complex2}
			\begin{tikzcd}
				0\arrow{r}{\subset}&\mathring S^r_h(\mathcal T_h)(\mathcal T_h) \arrow{r}{\grad} &\mathring{ \bm V}_h^{r-1}(\mathcal T_h)\arrow{r}{\curl_h} &  \mathring{\bm W}_h(\mathcal T_h) \arrow{r}{\div_h} &  \mathring{Q}_h(\mathcal T_h) \arrow{r}{} & 0
			\end{tikzcd}
		\end{equation}
		is exact when $\Omega$ is contractible. 
	\end{lemma}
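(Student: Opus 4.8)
The plan is to run the same three-part argument as in the proof of Lemma~\ref{exactness1}, while tracking the boundary degrees of freedom at each stage so that the scalar and vector potentials produced inherit the homogeneous boundary conditions. First I would establish exactness at $\mathring{\bm V}_h^{r-1}(\mathcal T_h)$. Given $\bm u_h\in\mathring{\bm V}_h^{r-1}(\mathcal T_h)$ with $\curl_h\bm u_h=0$, the gluing argument of Lemma~\ref{exactness1} already produces a global $p_h\in S_h^r(\mathcal T_h)$ with $\bm u_h=\grad p_h$; the only new point is to show $p_h$ may be taken in $\mathring S_h^r(\mathcal T_h)=S_h^r(\mathcal T_h)\cap H_0^1(\Omega)$. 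Since the edge DOFs~\eqref{Dof2} of $\bm u_h$ vanish on every boundary edge $e\subset\partial\Omega$, and $\grad p_h\cdot\bm\tau_e=\partial_{\bm\tau_e}p_h$ is a polynomial of degree $r-1$ on $e$ whose moments against $P_{r-1}(e)$ all vanish, we get $\partial_{\bm\tau_e}p_h=0$, so $p_h$ is constant along each boundary edge. Because $\partial\Omega$ is connected, all boundary vertex values coincide with a single constant $c$ and $p_h\equiv c$ on the boundary skeleton; the serendipity trace on each boundary face lies in $S_h^r(f)$ and is determined by its vertex values and edge moments, so unisolvence of $S_h^r(f)$ forces $p_h|_{\partial\Omega}\equiv c$. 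Replacing $p_h$ by $p_h-c$ (which does not change $\grad p_h$) gives the desired potential in $\mathring S_h^r(\mathcal T_h)$.

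Next I would treat exactness at $\mathring Q_h(\mathcal T_h)$, i.e.\ surjectivity of $\div_h$. For $q\in\mathring Q_h(\mathcal T_h)\subset L_0^2(\Omega)$, the continuous surjectivity of $\div\colon \bm H_0^1(\Omega)\to L_0^2(\Omega)$ yields $\bm w\in\bm H_0^1(\Omega)$ with $\div\bm w=q$; since $\bm w\in\bm H_0^1$, the face DOFs $\int_f\bm w\,\d A$ vanish for $f\subset\partial\Omega$, so $\bm\Pi_h\bm w\in\mathring{\bm W}_h(\mathcal T_h)$, and by the commutativity of Lemma~\ref{commuting} together with $q$ being piecewise constant, $\div_h\bm\Pi_h\bm w=\mathcal P_h\div\bm w=\mathcal P_h q=q$.

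Finally, exactness at $\mathring{\bm W}_h(\mathcal T_h)$ follows by a dimension count once the containments are in place. I would first check that each differential maps into the correct homogeneous space: $\grad\mathring S_h^r(\mathcal T_h)\subset\mathring{\bm V}_h^{r-1}(\mathcal T_h)$ and $\curl_h\mathring{\bm V}_h^{r-1}(\mathcal T_h)\subset\mathring{\bm W}_h(\mathcal T_h)$, where for the latter the tangential part of $\int_f\curl\bm u_h\times\bm n_f$ on a boundary face is the vanishing DOF~\eqref{Dof1}, while the normal part $\int_f\curl\bm u_h\cdot\bm n_f=\int_{\partial f}\bm u_h\cdot\bm\tau\,\d s$ vanishes by the boundary edge DOFs~\eqref{Dof2}. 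Together with $\curl_h\grad=0$ and $\div_h\curl_h=0$ this gives $\ran\grad\subseteq\ker\curl_h$ and $\ran\curl_h\subseteq\ker\div_h$. Combining the injectivity of $\grad$ on $\mathring S_h^r(\mathcal T_h)$, the already-proven exactness at $\mathring{\bm V}_h^{r-1}$ and at $\mathring Q_h$, and rank--nullity, exactness at $\mathring{\bm W}_h$ reduces to the alternating-sum identity $\dim\mathring S_h^r-\dim\mathring{\bm V}_h^{r-1}+\dim\mathring{\bm W}_h-\dim\mathring Q_h=0$. Using $\dim\mathring S_h^r=\mathcal V_h^{\text{int}}+(r-1)\mathcal E_h^{\text{int}}$, $\dim\mathring{\bm V}_h^{r-1}=r\mathcal E_h^{\text{int}}+2\mathcal F_h^{\text{int}}$, $\dim\mathring{\bm W}_h=3\mathcal F_h^{\text{int}}$, and $\dim\mathring Q_h=\mathcal K_h-1$, this identity becomes $\mathcal V_h^{\text{int}}-\mathcal E_h^{\text{int}}+\mathcal F_h^{\text{int}}-\mathcal K_h=-1$, which is the interior Euler relation for a contractible domain (equivalently $\chi(\Omega)-\chi(\partial\Omega)=1-2=-1$, as one checks on a single cube).

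I expect the main obstacle to be the boundary bookkeeping rather than any deep new idea. Two points deserve care: promoting the potential $p_h$ from $S_h^r(\mathcal T_h)$ to $H_0^1(\Omega)$ in the first step, which relies on the serendipity trace/unisolvence argument together with connectedness of $\partial\Omega$; and pinning down the correct interior Euler characteristic $-1$ in the dimension count, which differs from the relation used for the non-homogeneous complex in Lemma~\ref{exactness1} and is exactly what compensates for the removal of the $\mathbb R$ at the left end and the mean-zero constraint on $\mathring Q_h(\mathcal T_h)$.
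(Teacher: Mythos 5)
Your proposal is correct and takes exactly the route the paper intends: the paper's entire proof of this lemma is the one-line remark ``proceeding as the proof of Lemma~\ref{exactness1},'' and you have simply carried out that adaptation, supplying the boundary bookkeeping (normalizing the potential $p_h$ into $H_0^1(\Omega)$ via the vanishing boundary edge DOFs and connectedness of $\partial\Omega$, using $\div:\bm H_0^1(\Omega)\to L_0^2(\Omega)$, and replacing the Euler relation by the interior one $\mathcal V_h^{\text{int}}-\mathcal E_h^{\text{int}}+\mathcal F_h^{\text{int}}-\mathcal K_h=-1$) that the paper leaves implicit. No gaps; your identification of the shifted Euler characteristic as the compensating ingredient is precisely the point.
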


	\section{Application of the Nonconforming Elements to the Model Problem}
	
	We define $H_0(\grad\curl;\Omega)$ with vanishing boundary conditions:
	\begin{align*}
		H_0(\grad\curl;\Omega):=&\left\{\bm u \in H(\grad\curl;\Omega): {\bm n}\times\bm u=0\; \text{and}\ \curl \bm u=0\; \text{on}\ \partial \Omega\right\}.
	\end{align*}
	
	To deal with the divergence-free condition, we introduce a Lagrange multiplier $p$. The mixed variational formulation is to find $(\bm u;p)\in H_0(\grad\curl;\Omega)\times H^1_0(\Omega)$  such that
	\begin{equation}\label{prob2}
		\begin{split}
			a(\bm u,\bm v)+b(\bm v,p)&=(\bm f, \bm v),\quad \forall \bm v\in H_0(\grad\curl;\Omega),\\
			b(\bm u,q)&=0,\quad \forall q\in H_0^1(\Omega),
		\end{split}
	\end{equation}
	with $a(\bm u,\bm v):=\epsilon(\grad\curl\bm u, \grad\curl\bm v) +\alpha( \curl \bm u,\curl \bm v)+\beta (\bm u,\bm v)$ and $b(\bm v,p)=(\bm v,\nabla p)$.
	
	Taking $\bm v=\grad p$ in \eqref{prob2} and using the vanishing boundary condition of $p$, we can obtain that $p\equiv 0$.
	
	The nonconforming finite element method for \eqref{prob2} seeks $(\bm u_h;p_h)\in \mathring{\bm V}^{r-1}_h(\mathcal T_h) \times\mathring S^r_h(\mathcal T_h)$  such that
	\begin{equation}\label{prob22}
		\begin{split}
			a_h(\bm u_h,\bm v_h)+b(\bm v_h,p_h)&=(\bm f, \bm v_h),\quad \forall \bm v_h\in \mathring{\bm V}^{r-1}_h(\mathcal T_h),\\
			b(\bm u_h,q_h)&=0,\quad \forall q_h\in \mathring S^r_h(\mathcal T_h),
		\end{split}
	\end{equation}
	where $a_h(\bm u_h,\bm v_h):=\sum_{K\in\mathcal T_h}\big(\epsilon(\grad\curl\bm u_h, \grad\curl\bm v_h)_K +\alpha(\curl\bm u_h, \curl\bm v_h)_K + \beta(\bm u_h,\bm v_h)_K\big)$.
	Define 
	\[X_h=\{\bm v\in \mathring{\bm V}^{r-1}_h(\mathcal T_h):b(\bm v,q)=0, \ \forall q\in \mathring S^r_h(\mathcal T_h)\},\]
	and
	\[\|\bm v_h\|_{a_h}^2=a_h(\bm v_h,\bm v_h),\ \ \|\bm v_h\|_{\grad\curl,h}^2=\epsilon|\curl_h\bm v_h|^2_{1,h}+\|\curl_h\bm v_h\|^2+\|\bm v_h\|^2,\]
	where $|\bm v|^2_{1,h}:=\sum_{K\in\mathcal T_h}\|\grad \bm v\|_K^2$ and $\curl_h$ means taking $\curl$ element by element.
	To show the well-posedness of numerical scheme \eqref{prob22}, we will first prove  the discrete Poincar\'e inequality for functions in $X_h$. To this end, we follow the idea in \cite{Huang2020Nonconforming} and define the interpolation
	$\bm I_K^{r-1}:\bm H^2(K)\longrightarrow \grad S_h^r(K)\oplus \mathfrak p [Q_{1,0,0}\times Q_{0,1,0}\times Q_{0,0,1}]$ for $K\in\mathcal{T}_h$ by
	\begin{align}\label{Hcurlinter}
		\int_e\bm I_K^{r-1} \bm v\cdot \bm \tau_e q\d s=\int_e  \bm v\cdot \bm \tau_e q\d s\ \text{for all } q\in P_{r-1}(e) \text{ and }e\in \mathcal{E}_h(K).
	\end{align}	
	Then we have \cite[Theoreoms 6.7, 6.3]{Monk2003}
	\begin{align}\label{curlestimate}
		\|\curl (\bm v-\bm I_K^{r-1}\bm v)\|_{K}\le Ch_K|\curl \bm v|_{1,K},\ \  \forall\ \bm v\in \bm H^2(K).
	\end{align}
	Proceeding as in the proof of \cite[Lemma 4.2]{Huang2020Nonconforming}, it holds that
	\begin{align}\label{L2estimate}
		\|\bm v-\bm I_K^{r-1}\bm v\|_{K}\le Ch_K\|\curl \bm v\|_{K},\ \ \forall\ \bm v\in \bm V_h^{r-1}(K).
	\end{align}
	Define
	\begin{align*}
	    &\bm H_{t,r-1}^2(\mathcal T_h)=\{\bm u\in \bm L^2(\Omega): \bm u|_K\in \bm H^2(K)\ \text{for each} \ K\in \mathcal{T}_h\text{ and }\nonumber \\
	    &\qquad\qquad  \ \ \int_e\bm u\cdot\bm\tau_eq\d s\text{ for $q\in P_{r-1}(e)$ is single-valued across elements for any } e\in \mathcal E_h\},\\ 
	    &\bm V_{h,c}^{r-1} = \{\bm u\in H(\curl;\Omega):\bm u|_K\in \grad S_h^r(K)\oplus \mathfrak p [Q_{1,0,0}\times Q_{0,1,0}\times Q_{0,0,1}]\ \text{for each} \ K\in \mathcal{T}_h\}.
	\end{align*}
	Then $\bm I_h^{r-1}: \bm H_{t,r-1}^2(\mathcal T_h)\longrightarrow\bm V_{h,c}^{r-1}$ is determined by 
	\[(\bm I_h^{r-1}\bm v)|_K:=\bm I_K^{r-1}(\bm v|_K)\text{ for all }K\in \mathcal{T}_h.\]
	
	For $\bm v\in {\bm V}^{r-1}_h(\mathcal T_h)$, since $\int_e\bm v\cdot\bm\tau_eq\d s$ is single-valued for $q\in P_{r-1}(e)$ and $e\in \mathcal E_h$, ${\bm V}^{r-1}_h(\mathcal T_h)\subset \bm H_{t,r-1}^2(\mathcal T_h)$, and hence $\bm I_h^{r-1}\bm v$ is well-defined. 
	
	For simplicity, we denote $\bm I_h^{r-1}$ as $\bm I_h$ when $r=1$.

	Now we are ready to prove the discrete Poincar\'e inequality. 
	\begin{lemma}
		The following discrete Poincar\'e inequality holds 
		\begin{align}\label{poincare}
			\|\bm v_0\|\le C_P\|\curl_h \bm v_0\|, \qquad\forall \bm v_0\in X_h.
		\end{align}
	\end{lemma}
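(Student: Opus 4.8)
The plan is to compare $\bm v_0$ with its $H(\curl)$-conforming interpolant $\bm I_h\bm v_0:=\bm I_h^{r-1}\bm v_0\in\bm V_{h,c}^{r-1}$ and to control the conforming part through a discrete Helmholtz decomposition. Since $\bm v_0|_K\in\bm V_h^{r-1}(K)$ on every $K$, estimate \eqref{L2estimate} applies element-wise and, after summation, gives $\|\bm v_0-\bm I_h\bm v_0\|\le Ch\|\curl_h\bm v_0\|$. By the triangle inequality $\|\bm v_0\|\le\|\bm v_0-\bm I_h\bm v_0\|+\|\bm I_h\bm v_0\|$, so it remains to bound $\|\bm I_h\bm v_0\|$ by $\|\curl_h\bm v_0\|$. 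Because $\bm v_0\in\mathring{\bm V}_h^{r-1}(\mathcal T_h)$ has vanishing tangential DOFs on $\partial\Omega$ and $\bm I_h^{r-1}$ reproduces only edge moments, $\bm I_h\bm v_0$ lies in $\bm V_{h,c}^{r-1}\cap H_0(\curl;\Omega)$; moreover $\curl(\bm I_h\bm v_0)=\curl_h\bm v_0-\curl_h(\bm v_0-\bm I_h\bm v_0)$, and combining \eqref{curlestimate} with an inverse inequality applied to the polynomial $\curl\bm v_0$ yields $\|\curl\bm I_h\bm v_0\|\le C\|\curl_h\bm v_0\|$.

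Next I would split $\bm I_h\bm v_0$ by the $L^2$-orthogonal discrete Helmholtz decomposition associated with $\grad\mathring S_h^r(\mathcal T_h)\subset\bm V_{h,c}^{r-1}$: write $\bm I_h\bm v_0=\grad\phi_h+\bm z_h$, where $\phi_h\in\mathring S_h^r(\mathcal T_h)$ is the discrete potential determined by $(\grad\phi_h,\grad\psi_h)=(\bm I_h\bm v_0,\grad\psi_h)$ for all $\psi_h\in\mathring S_h^r(\mathcal T_h)$, and $\bm z_h:=\bm I_h\bm v_0-\grad\phi_h$ satisfies $(\bm z_h,\nabla\psi_h)=0$ for all such $\psi_h$. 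Orthogonality gives $\|\bm I_h\bm v_0\|^2=\|\grad\phi_h\|^2+\|\bm z_h\|^2$. The decisive step is the bound on the gradient part: taking $\psi_h=\phi_h$ and adding and subtracting $\bm v_0$,
\[
\|\grad\phi_h\|^2=(\bm I_h\bm v_0,\grad\phi_h)=(\bm I_h\bm v_0-\bm v_0,\grad\phi_h)+(\bm v_0,\nabla\phi_h),
\]
and here the last term vanishes exactly because $\phi_h\in\mathring S_h^r(\mathcal T_h)$ and $\bm v_0\in X_h$ is discretely divergence-free. Hence $\|\grad\phi_h\|\le\|\bm v_0-\bm I_h\bm v_0\|\le Ch\|\curl_h\bm v_0\|$. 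It is precisely the use of the discrete potential $\phi_h$, rather than a continuous Helmholtz potential, that removes the remainder term and makes this estimate close.

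For the remaining part note that $\curl\bm z_h=\curl\bm I_h\bm v_0$ since $\curl\grad\phi_h=0$, and that $\bm z_h\in\bm V_{h,c}^{r-1}\cap H_0(\curl;\Omega)$ is discretely divergence-free. Invoking the discrete Poincar\'e--Friedrichs inequality for the $H(\curl)$-conforming space $\bm V_{h,c}^{r-1}$, namely $\|\bm z_h\|\le C\|\curl\bm z_h\|$, together with the curl bound of the first paragraph, gives $\|\bm z_h\|\le C\|\curl\bm I_h\bm v_0\|\le C\|\curl_h\bm v_0\|$. Collecting the two pieces yields $\|\bm I_h\bm v_0\|\le C\|\curl_h\bm v_0\|$, and combined with the first paragraph this establishes \eqref{poincare}.

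I expect the main obstacle to be the conforming discrete Poincar\'e--Friedrichs inequality $\|\bm z_h\|\le C\|\curl\bm z_h\|$ with an $h$-independent constant: since $\bm z_h$ is only discretely, not genuinely, divergence-free, the continuous Poincar\'e inequality does not apply directly, and a naive continuous Helmholtz argument on $\bm I_h\bm v_0$ would only produce a circular bound. The clean resolution is to derive it from the discrete compactness of $\bm V_{h,c}^{r-1}$, or equivalently from a regular decomposition $\bm z_h=\nabla r+\bm\varphi$ with $\bm\varphi\in[H^1(\Omega)]^3$, $\|\bm\varphi\|_1\le C\|\curl\bm z_h\|$, combined with a bounded commuting quasi-interpolation onto $\bm V_{h,c}^{r-1}$; alternatively it can be cited for this serendipity-based conforming family (cf.\ the $H(\curl)$-conforming elements in \cite{zhang2022nfecubicalmeshes}). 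A secondary point to verify is the inclusion $\grad\mathring S_h^r(\mathcal T_h)\subset\bm V_{h,c}^{r-1}$ and that both $\grad\phi_h$ and $\bm z_h$ inherit the vanishing tangential trace, so that the decomposition and its boundary conditions are legitimate.
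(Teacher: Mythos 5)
Your proposal is correct and follows essentially the same route as the paper: compare $\bm v_0$ with the $H(\curl)$-conforming interpolant $\bm I_h\bm v_0$ via \eqref{curlestimate}--\eqref{L2estimate} and an inverse inequality, apply the discrete Helmholtz decomposition of $\bm I_h\bm v_0$ in $\bm V_{h,c}^{r-1}\cap H_0(\curl;\Omega)$, kill the gradient part using $b(\bm v_0,\cdot)=0$, and invoke the cited discrete Poincar\'e inequality for the conforming discretely divergence-free subspace. The only cosmetic difference is that you exploit the $L^2$-orthogonality of the decomposition explicitly, whereas the paper estimates $\|\bm I_h\bm v_0\|^2=(\bm I_h\bm v_0,\grad q_h+\bm r_h)$ directly; the key ingredients and the external result you flag as the main obstacle are exactly those the paper relies on.
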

	\begin{proof}
		According to the discrete Helmholtz decomposition \cite{Monk2003}, it follows that
		\begin{align}\label{helm-1}
			\bm I_h\bm v_0=\grad q_h+\bm r_h,
		\end{align}
		with  $q_h\in \mathring S^1_h(\mathcal T_h)$ and $\bm r_h\in Y_h=\{\bm v_h\in \bm V_{h,c}^{r-1}\cap H_0(\curl;\Omega): b(\bm v_h, q)=0,\ \forall q\in \mathring S^1_h(\mathcal T_h)\}$. 
	By \eqref{helm-1} and the discrete Poincar\'e inequality for $Y_h$, we can arrive at
		\begin{align*}
			\|\bm I_h\bm v_0\|^2&=(\bm I_h\bm v_0, \grad q_h+\bm r_h)\\
			&=(\bm I_h\bm v_0-\bm v_0, \grad q_h)+(\bm I_h\bm v_0, \bm r_h)\\
			&\le \|\bm I_h\bm v_0-\bm v_0\|\|\bm I_h\bm v_0\|+C\|\bm I_h\bm v_0\|\|\curl\bm r_h\|\\
			&= \|\bm I_h\bm v_0-\bm v_0\|\|\bm I_h\bm v_0\|+C\|\bm I_h\bm v_0\|\|\curl \bm I_h\bm v_0\|,
		\end{align*}
		which implies
		\begin{align*}
			\|\bm I_h\bm v_0\|\le \|\bm I_h\bm v_0-\bm v_0\|+C\|\curl\bm I_h\bm v_0\|.
		\end{align*}
		Therefore, we have
		\begin{align*}
			\|\bm v_0\|&\le \|\bm v_0-\bm I_h\bm v_0\|+\|\bm I_h\bm v_0\|\\
			&\le 2\|\bm v_0-\bm I_h\bm v_0\|+C\|\curl \bm I_h\bm v_0\|\\
			&\le 2\|\bm v_0-\bm I_h\bm v_0\|+C\|\curl\bm I_h\bm v_0-\curl_h\bm v_0\|+C\|\curl_h\bm v_0\|.
		\end{align*}
		Then \eqref{poincare} follows immediately from \eqref{curlestimate}, \eqref{L2estimate}, and the inverse inequality.
	\end{proof}
	\begin{remark}
		Different with the proof of \cite[lemma 4.3]{Huang2020Nonconforming}, our method does not require the existence of bounded commuting projection operators.
	\end{remark}
	
	We are now in a position to state the wellposedness of the numerical scheme. The finite element spaces $\mathring{\bm V}^{r-1}_h(\mathcal T_h)$ and  $\mathring S^r_h(\mathcal T_h)$ satisfy:
	\begin{itemize}
	
		\item the inf-sup condition
		\begin{align}\label{infsup2}
			\sup_{\bm w_h\in\mathring{\bm V}^{r-1}_h(\mathcal T_h)}\frac{b(\bm w_h,s_h)}{\|\bm w_h\|_{\grad\curl,h}}\geq \frac{(\grad s_h,\grad s_h)}{\|\grad s_h\|_{\grad\curl,h}}=\|\grad s_h\|\ge 
			C\|s_h\|_1, \quad\forall s_h\in \mathring S^r_h(\mathcal T_h),
		\end{align}
		\item the coercivity condition
		\begin{align}\label{coer}
			\|\bm v_h\|_{a_h}\geq c\|\bm v_h\|_{\grad\curl,h}\text{ for }\bm v_h\in X_h,
		\end{align}
		which is directly obtained from \eqref{poincare}.
	\end{itemize} 
Then the finite element scheme is well-posed.
	\begin{theorem}\label{converg2}	Problem \eqref{prob22} has a unique solution $(\bm u_h;p_h)$  such that $p_h=0$ and
		\begin{align}\label{err-abs}
			\|\bm u-\bm u_h\|_{\grad\curl,h}&\leq C\Big(\inf_{\bm v_h\in \mathring{\bm V}^{r-1}_h(\mathcal T_h)}\|\bm u-\bm v_h\|_{\grad\curl,h}+\sup_{\bm v_0\in X_h}\frac{E_h(\bm u,\bm v_0)}{\|\bm v_0\|_{a_h}}\Big),
		\end{align}
		where the consistency error 
		\begin{align*}
			E_h(\bm u,\bm v_0)=&-\sum_{f\in \mathcal{F}_h}\epsilon\langle{\partial_{\bm n_f} (\curl\bm u)}, [\![\curl_h\bm v_0]\!]\rangle_f+\sum_{f\in\mathcal F_h}\alpha\langle\curl\bm u, [\![\bm v_0\times \bm n_f]\!]\rangle_f\\
			&+\sum_{f\in \mathcal{F}_h}\epsilon\langle \Delta\curl\bm u,[\![\bm v_0\times \bm n_f]\!]\rangle_f.
		\end{align*}
	\end{theorem}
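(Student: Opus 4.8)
The plan is to read \eqref{prob22} as a saddle-point system and combine the Brezzi theory with a second-Strang-lemma argument. First I would settle well-posedness. The form $a_h$ is a symmetric positive semidefinite bilinear form, so Cauchy--Schwarz gives $|a_h(\bm v,\bm w)|\le\|\bm v\|_{a_h}\|\bm w\|_{a_h}$, and since $\|\bm v\|_{a_h}^2=\epsilon|\curl_h\bm v|_{1,h}^2+\alpha\|\curl_h\bm v\|^2+\beta\|\bm v\|^2\le\max\{1,\alpha,\beta\}\,\|\bm v\|_{\grad\curl,h}^2$, the form $a_h$ is bounded with respect to $\|\cdot\|_{\grad\curl,h}$; it is coercive on the discrete kernel $X_h$ by \eqref{coer}. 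Together with the boundedness of $b$ and the inf-sup condition \eqref{infsup2}, Brezzi's theorem yields a unique $(\bm u_h;p_h)$. I would stress that the equivalence constants between $\|\cdot\|_{a_h}$ and $\|\cdot\|_{\grad\curl,h}$ on $X_h$ are independent of $\epsilon$, which is what makes the later estimates robust.

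Next I would show $p_h=0$. By the exactness of \eqref{FE-stokes-complex2} (Lemma \ref{exactness}) we have $\grad\mathring S^r_h(\mathcal T_h)\subset\mathring{\bm V}^{r-1}_h(\mathcal T_h)$, so I may take $\bm v_h=\grad p_h$ in the first equation of \eqref{prob22}. Because $\curl\grad p_h=0$, the $\epsilon$- and $\alpha$-contributions of $a_h(\bm u_h,\grad p_h)$ vanish, while the $\beta$-contribution is $\beta\,b(\bm u_h,p_h)=0$ by the constraint in \eqref{prob22}; moreover $b(\grad p_h,p_h)=\|\grad p_h\|^2$ and $(\bm f,\grad p_h)=-(\div\bm f,p_h)=0$ since $\bm f$ is divergence-free and $p_h\in H^1_0(\Omega)$. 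Hence $\|\grad p_h\|^2=0$ and $p_h=0$. Consequently $\bm u_h\in X_h$ and $a_h(\bm u_h,\bm v_0)=(\bm f,\bm v_0)$ for every $\bm v_0\in X_h$.

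For the error bound I would first derive the consistency equation. Testing the strong form of \eqref{OriginProblem} against $\bm v_0\in X_h$ and integrating by parts on each $K$, then collapsing the element-boundary integrals into the face sum (with the convention $[\![\cdot]\!]=$ trace on $\partial\Omega$), produces $a_h(\bm u,\bm v_0)=(\bm f,\bm v_0)+E_h(\bm u,\bm v_0)$, and therefore $a_h(\bm u-\bm u_h,\bm v_0)=E_h(\bm u,\bm v_0)$ for all $\bm v_0\in X_h$. The $\alpha$-term $\alpha\curl\curl\bm u$ needs a single integration by parts and yields the $\curl\bm u$ pairing against $[\![\bm v_0\times\bm n_f]\!]$, whereas the $\epsilon$-term $-\epsilon\curl\Delta\curl\bm u$ needs two, producing both the $\partial_{\bm n_f}(\curl\bm u)$ pairing against $[\![\curl_h\bm v_0]\!]$ and the $\Delta\curl\bm u$ pairing against $[\![\bm v_0\times\bm n_f]\!]$; the $\beta$-term contributes no boundary term, and since $p=0$ no multiplier term enters. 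Matching these three boundary contributions to the stated $E_h$, keeping track of the signs, the orientation of $\bm n_f$, and the normal/tangential traces, is the most delicate bookkeeping step.

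Finally I would run the Strang argument. For arbitrary $\bm w_h\in X_h$, the splitting $a_h(\bm w_h-\bm u_h,\bm w_h-\bm u_h)=a_h(\bm w_h-\bm u,\bm w_h-\bm u_h)+a_h(\bm u-\bm u_h,\bm w_h-\bm u_h)$, bounded above by the boundedness of $a_h$ on the first term and by the consistency equation on the second (legitimate since $\bm w_h-\bm u_h\in X_h$), combined with the coercivity \eqref{coer}, gives
\[
\|\bm w_h-\bm u_h\|_{\grad\curl,h}\le C\Big(\|\bm u-\bm w_h\|_{\grad\curl,h}+\sup_{\bm v_0\in X_h}\frac{E_h(\bm u,\bm v_0)}{\|\bm v_0\|_{a_h}}\Big),
\]
and a triangle inequality bounds $\|\bm u-\bm u_h\|_{\grad\curl,h}$ by the same right-hand side with the infimum over $\bm w_h\in X_h$. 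To replace the kernel infimum by the infimum over all of $\mathring{\bm V}^{r-1}_h(\mathcal T_h)$ claimed in \eqref{err-abs}, I would invoke the inf-sup condition \eqref{infsup2}: since $\bm u$ is divergence-free, $b(\bm u,q)=0$ for $q\in\mathring S^r_h(\mathcal T_h)$, so for any $\bm v_h\in\mathring{\bm V}^{r-1}_h(\mathcal T_h)$ the inf-sup stability furnishes a correction $\bm z_h$ with $b(\bm v_h-\bm z_h,\cdot)=0$ and $\|\bm z_h\|_{\grad\curl,h}\le C\|\bm u-\bm v_h\|_{\grad\curl,h}$, whence $\bm w_h:=\bm v_h-\bm z_h\in X_h$ satisfies $\|\bm u-\bm w_h\|_{\grad\curl,h}\le C\|\bm u-\bm v_h\|_{\grad\curl,h}$. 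Taking the infimum over $\mathring{\bm V}^{r-1}_h(\mathcal T_h)$ gives \eqref{err-abs}. The two places demanding the most care are the element-wise integration by parts that pins down $E_h$ and this kernel-to-space reduction.
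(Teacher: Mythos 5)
Your proposal is correct and takes essentially the same route as the paper: the element-wise integration by parts that identifies $E_h$ is exactly the computation the paper carries out, and the Brezzi well-posedness plus Strang-lemma argument (coercivity \eqref{coer} on $X_h$, the consistency identity $a_h(\bm u-\bm u_h,\bm v_0)=E_h(\bm u,\bm v_0)$, and the kernel-to-space reduction via the inf-sup condition \eqref{infsup2}) is precisely the abstract machinery the paper delegates to the citation \cite[Theorem 3.3]{zhang2022nfecubicalmeshes}. The only caveat is sign bookkeeping in $E_h$: the paper's own displayed computation produces the $\epsilon$-terms with signs opposite to those in the theorem statement, which is immaterial for the later estimates since only $|E_h|$ is used, but you should not expect your "delicate bookkeeping step" to reproduce the stated signs exactly.
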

	\begin{proof}
		Define 
		\begin{align}\label{E2h}
		E_h(\bm u,\bm v_h):=a_h(\bm u,\bm v_h)+b(\bm v_h,p)-(\bm f,\bm v_h).
		\end{align}
		Then estimate \eqref{err-abs} follows from \cite[Theorem 3.3]{zhang2022nfecubicalmeshes}. From integration by parts and the fact that $p=0$, we get
		\begin{align*}
		&E_h(\bm u,\bm v_h)=	a_h(\bm u,\bm v_h)-(\bm f,\bm v_h)\\
		=&\sum_{K\in\mathcal T_h}\epsilon(\grad\curl\bm u, \grad\curl\bm v_h)_K +\sum_{K\in\mathcal T_h}\alpha(\curl\bm u, \curl\bm v_h)_K + \sum_{K\in\mathcal T_h}\beta(\bm u,\bm v_h)_K-(\bm f,\bm v_h)\\
			=&-\epsilon\sum_{K\in\mathcal T_h}(\Delta\curl\bm u, \curl\bm v_h)_K +\sum_{K\in\mathcal T_h}\alpha(\curl\curl\bm u, \bm v_h)_K + \sum_{K\in\mathcal T_h}\beta(\bm u,\bm v_h)_K-(\bm f,\bm v_h)\\
			&
			+\sum_{f\in \mathcal{F}_h}\epsilon\langle {\partial_{\bm n_f} (\curl\bm u)}, [\![\curl_h\bm v_h]\!]\rangle_f+\sum_{f\in\mathcal F_h}\alpha\langle\curl\bm u, [\![\bm v_h\times \bm n_f]\!]\rangle_f\\
			=&\sum_{f\in \mathcal{F}_h}\epsilon\langle {\partial_{\bm n_f} (\curl\bm u)}, [\![\curl_h\bm v_h]\!]\rangle_f+\sum_{f\in\mathcal F_h}\alpha\langle\curl\bm u, [\![\bm v_h\times \bm n_f]\!]\rangle_f\\
			&-\sum_{f\in \mathcal{F}_h}\epsilon\langle \Delta\curl\bm u,[\![\bm v_h\times \bm n_f]\!]\rangle_f.
		\end{align*}
	\end{proof}
	
	To estimate the consistency error $E_h$, we need the following lemma, which is the key for the element working for problem \eqref{OriginProblem} with small $\epsilon$.
	\begin{lemma}\label{jump-property}
		For any $\bm v_h\in {\bm V}^{r-1}_h(\mathcal T_h)$ and $K\in\mathcal T_h$, we have
		\begin{align}\label{jump-pro}
			\int_{\partial K}\bm  q\cdot (\bm v_h-\bm I_h^{r-1}\bm v_h)\times \bm n_{\partial K}\d A=0, \qquad\text{ for } \bm  q\in \bm P_{0}(K),
		\end{align}
  where $\bm n_{\partial K}$  is the unit outward normal vector to $\partial K$. 
	\end{lemma}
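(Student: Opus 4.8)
The plan is to turn the surface integral into a volume integral of a curl and then to exploit the special polynomial structure of $\bm W_h(K)$. Set $\bm w:=\bm v_h-\bm I_h^{r-1}\bm v_h$. For a constant $\bm q\in\bm P_0(K)$ the scalar triple product identity gives $\bm q\cdot(\bm w\times\bm n_{\partial K})=\bm n_{\partial K}\cdot(\bm q\times\bm w)$, so by the divergence theorem together with $\div(\bm q\times\bm w)=\bm w\cdot\curl\bm q-\bm q\cdot\curl\bm w=-\bm q\cdot\curl\bm w$ (as $\curl\bm q=\bm 0$),
\[
\int_{\partial K}\bm q\cdot(\bm w\times\bm n_{\partial K})\,\d A=-\bm q\cdot\int_K\curl\bm w\,\d V .
\]
As $\bm q$ ranges over all constants, \eqref{jump-pro} is therefore equivalent to the single vector identity $\int_K\curl\bm w\,\d V=\bm 0$, which I will establish next.

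First I would check that $\curl\bm w\in\bm W_h(K)$. Writing $\bm v_h=\grad q+\mathfrak p\bm w_W$ and $\bm I_K^{r-1}\bm v_h=\grad\tilde q+\mathfrak p\tilde{\bm w}$ with $\bm w_W\in\bm W_h(K)$ and $\tilde{\bm w}\in[Q_{1,0,0}\times Q_{0,1,0}\times Q_{0,0,1}]$, the identity \eqref{identity-poincare} yields $\curl\bm v_h=\bm w_W-\mathfrak p^3\div\bm w_W$ and $\curl\bm I_K^{r-1}\bm v_h=\tilde{\bm w}-\mathfrak p^3\div\tilde{\bm w}$. Since every quadratic generator in \eqref{WK} is divergence free and the divergence of $[P_1]^3$ consists of constants, both $\div\bm w_W$ and $\div\tilde{\bm w}$ are constant, and $\mathfrak p^3$ of a constant is a multiple of $\bm x\in[P_1]^3$; hence both curls lie in $\bm W_h(K)$ and so does their difference $\curl\bm w$. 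The decisive structural observation is that, by the very form of \eqref{WK}, the $i$-th component of any $\bm u\in\bm W_h(K)$ is affine in the variable $x_i$, because the quadratic generators placed in the $i$-th slot involve only the two variables other than $x_i$ and never $x_i^2$. Consequently $\int_K u_i\,\d V$ is a fixed multiple of the sum of the integrals of $u_i$ over the two faces perpendicular to $\bm e_i$, hence a linear combination of the normal moments $\int_f\bm u\cdot\bm n_f\,\d A$ of $\bm u$ alone.

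It then remains to show that all normal face moments of $\curl\bm w$ vanish. For each face $f$, Stokes' theorem gives $\int_f\curl\bm w\cdot\bm n_f\,\d A=\int_{\partial f}\bm w\cdot\bm\tau\,\d s$, and since $\bm I_K^{r-1}$ is defined in \eqref{Hcurlinter} precisely by matching the edge moments \eqref{Dof2}, we have $M_e(\bm w)=0$ for every edge of $K$; taking $q\equiv 1\in P_{r-1}(e)$ gives $\int_e\bm w\cdot\bm\tau_e\,\d s=0$, whence $\int_{\partial f}\bm w\cdot\bm\tau\,\d s=0$. Feeding these vanishing normal moments into the structural formula of the previous paragraph forces $\int_K\curl\bm w\,\d V=\bm 0$, which completes the proof. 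The main obstacle — and the feature that distinguishes the present cubical element from the tetrahedral one in \cite{Huang2020Nonconforming} — is exactly that structural observation that $\int_K\bm u\,\d V$ is controlled by the normal moments only; it rests on the absence of the $x_i^2$ generator in the $i$-th slot of \eqref{WK}, so I would verify it component by component while carefully tracking that $\curl\bm I_K^{r-1}\bm v_h$ indeed stays inside $\bm W_h(K)$.
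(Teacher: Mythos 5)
Your proof is correct, and it takes a genuinely different route from the paper's. The paper proves the identity by brute force: it expands $\bm v_h$ and $\bm I_h^{r-1}\bm v_h$ in the dual bases of the edge and face DOFs, writes out the corresponding basis functions explicitly on the reference cube, and verifies by direct calculation that each coefficient $a_i=\int_{\partial K}\bm q\cdot\big((\bm N_i^e-\bm N_{c,i}^e)\times\bm n_{\partial K}\big)\d A$ and $b_{i,j}=\int_{\partial K}\bm q\cdot\big(\bm N_{i,j}^f\times\bm n_{\partial K}\big)\d A$ vanishes. You instead reduce the claim, via $\int_{\partial K}\bm q\cdot(\bm w\times\bm n_{\partial K})\d A=-\bm q\cdot\int_K\curl\bm w\,\d V$, to the single identity $\int_K\curl(\bm v_h-\bm I_K^{r-1}\bm v_h)\d V=\bm 0$, observe that this curl lies in $\bm W_h(K)$ (consistent with what the paper itself notes in the unisolvence proof), and then exploit the structural fact that the $i$-th component of any element of $\bm W_h(K)$ is affine in $x_i$, so that the exact trapezoidal rule turns the volume integral into a combination of normal face moments $\int_f\curl\bm w\cdot\bm n_f\,\d A$, which vanish by Stokes' theorem and the matched edge moments \eqref{Hcurlinter}. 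All steps check out (the quadratic generators of \eqref{WK} are divergence-free, so $\mathfrak p^3\div$ of anything in $\bm W_h(K)$ or in $[Q_{1,0,0}\times Q_{0,1,0}\times Q_{0,0,1}]$ is linear, keeping the curls inside $\bm W_h(K)$, and $1\in P_{r-1}(e)$ kills every edge circulation of $\bm w$). What your argument buys is an explanation of \emph{why} the cubical element enjoys this property --- the absence of the $x_i^2$ generator in the $i$-th slot of \eqref{WK} --- which the paper's explicit computation establishes but does not illuminate; what the paper's computation buys is independence from these structural observations and a check that is purely mechanical.
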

	\begin{proof} 
	\begin{figure}[htb]
			\centering
			\includegraphics[width=0.33\textwidth]{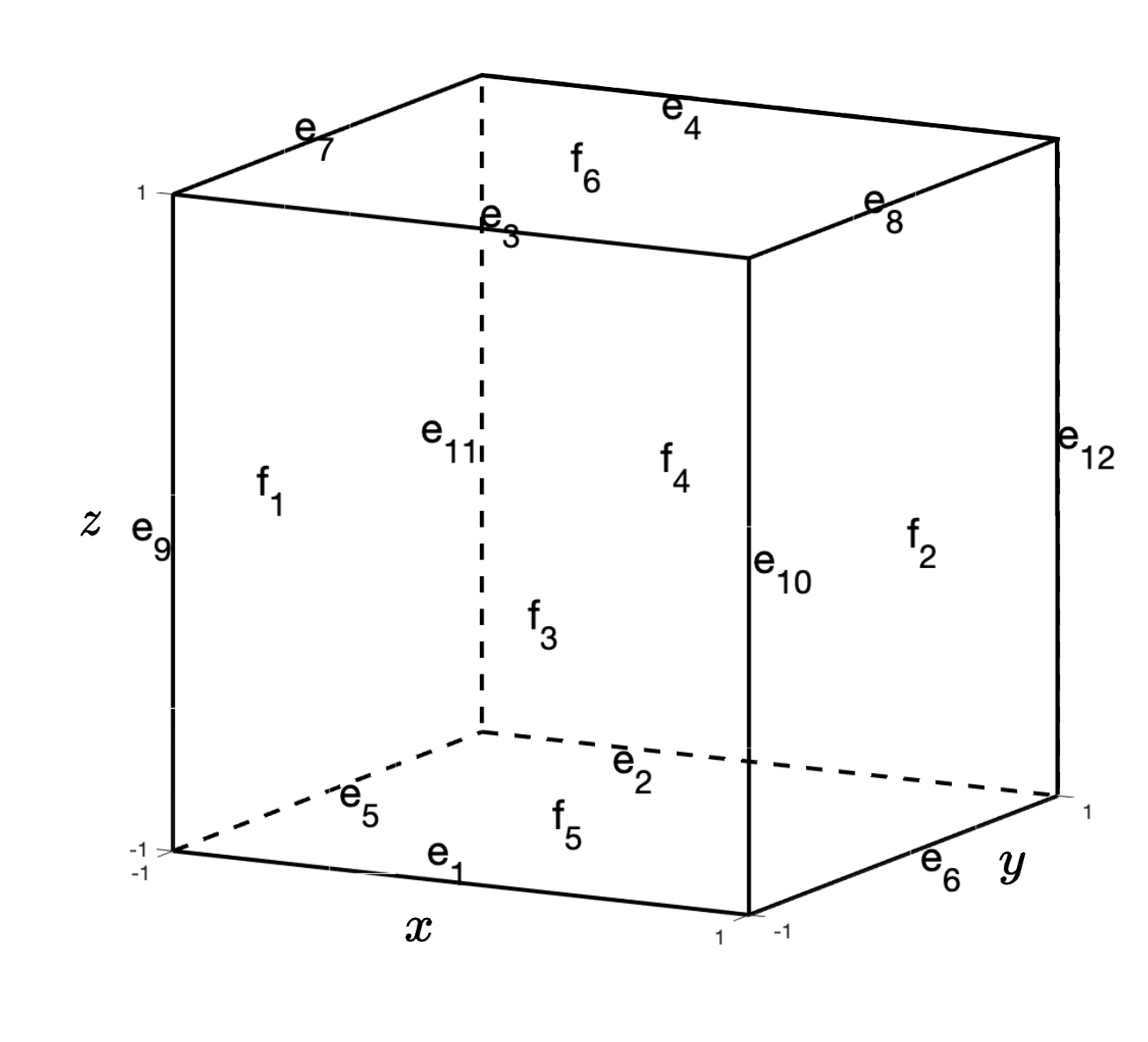}
			\caption{the  element $K$}\label{patch}
		\end{figure}%

	We will prove the lemma only for ${\bm V}^0_h(\mathcal T_h)$, and the lemma for ${\bm V}^1_h(\mathcal T_h)$ can be obtained similarly. For an element $K\in \mathcal T_h$, we sort the edges and faces as in Figure \ref{patch}.
	 Then we can rewrite $\bm v_h\in {\bm V}^0_h(\mathcal T_h)$ and $\bm I_h\bm v_h\in {\bm V}_{h,c}^0(\mathcal T_h)$ as follows,
	\begin{align}\label{vhT}
			\begin{split}
				\bm v_h|_{K}=&\sum_{i=1}^{12}\bm N_i^e\int_{e_i}\bm  v_h\cdot\bm \tau_{e_i}\d s
				+\sum_{i=1}^6\bm N_{i,1}^{f}\int_{f_i}\curl_h \bm  v_h\cdot \bm \tau_{f_i}^{1}\d A\\
             &+\sum_{i=1}^6\bm N_{i,2}^{f}\int_{f_i}\curl_h \bm  v_h\cdot \bm \tau_{f_i}^{2}\d A,
			\end{split}
   \\
   \bm I_h\bm v_h|_{K}=&\sum_{i=1}^{12}\bm N_{c,i}^{e}\int_{e_i}\bm  v_h\cdot\bm \tau_{e_i}\d s,\label{IhvhT}
		\end{align}
   where  $\bm N_i^{e}$, $\bm N_{i,1}^f$ and $\bm N_{i,2}^f$ are the corresponding dual basis functions of ${\bm V}^{0}_h(\mathcal T_h)$ on $e_i$ and $f_i$, and $\bm N_{c,i}^{e}$ is the dual basis function of $\bm V_{h,c}^{0}$ on $e_i$. 
	
	Plugging \eqref{vhT} and \eqref{IhvhT} in $\int_{\partial K}\bm  q\cdot (\bm v_h-\bm I_h\bm v_h)\times \bm n_{\partial K}\d A$, we have
		\begin{align*}
			\int_{\partial K}\bm  q\cdot (\bm v_h-\bm I_h\bm v_h)\times \bm n_{\partial K}\d A&=\sum_{i=1}^{12}a_{i}\int_{e_i}\bm v_h\cdot\bm \tau_{e_i}\d s+\sum_{i=1}^6\sum_{j=1}^2b_{i,j}\int_{f_i}\curl_h\bm v_h\cdot \bm \tau_{f_i}^{j}\d A,
		\end{align*}
		where $a_i=\int_{\partial K}\bm q\cdot\big( (\bm N_i^e-\bm N_{c,i}^e)\times \bm n_{\partial K}\big)\d A$ and $b_{i,j}=\int_{\partial K}\bm q\cdot\big(\bm N_{i,j}^f\times \bm n_{\partial K}\big)\d A$. 
		
		We will show that $a_i=0$ for $i=1,2,\cdots,12$ and $b_{i,j}=0$ for $i=1,2,\cdots,6$ and $j=1,2$. The functions $\hat{\bm N}_i^{e}=B_K^T{\bm N}_i^{e}\circ F_K$, $\hat{\bm N}_{c,i}^{e}=B_K^T{\bm N}_{c,i}^{e}\circ F_K$, and  $\hat{\bm N}_{i,j}^f = \frac{\|B_K\bm\tau_{f_i}^j\|}{\|B_K\bm n_{f_i}\|}B_K^T{\bm N}_{i,j}^f\circ F_K $ are the corresponding basis functions on the reference element $\hat K$. Here we present only $\hat {\bm N}_6^e$, $\hat {\bm N}_{c,6}^e$ and $\hat{\bm N}_{2,j}^f$:
		\begin{align*}
			\hat {\bm N}_6^e=\left(\begin{matrix}
				3x^2y/64 + 3y^3/64 - y/16\\
				3x/16 - 3xy^2/64 - xz/8 - 3x^3/64 + 3y^2z/64 + 3z^3/64 - 3z/16 + 1/8)\\
				y/16 - 3yz^2/64 - (3y^3)/64
			\end{matrix}\right),	\end{align*}
     		\begin{align*}
			\hat {\bm N}_{2,1}^f=\left(\begin{matrix}
				3zx^2/64 + zx/24 - z/64\\
				0\\
				3x/64 - x^2/24 - 3x^3/64 + 1/24\end{matrix}\right),
			\hat {\bm N}_{2,2}^f=\left(\begin{matrix}
				y/64 - yx/24 - 3yx^2/64\\
				3x^3/64 + x^2/24 - 3x/64 - 1/24\\
				0\end{matrix}\right),
					\end{align*}
					\begin{align*}
     \hat {\bm N}_{c,6}^e=\left(\begin{matrix}
				0\\
				(1-z)(1+x)/8\\
				0
			\end{matrix}\right).   \end{align*}
Then
		\begin{align*}
			a_i&=\int_{\partial K}\bm q\cdot\big( (\bm N_i^e-\bm N_{c,i}^e)\times \bm n_{\partial K}\big)\d A
			=\int_{\partial{\hat K}}\hat {\bm q}\cdot\big( ( \hat{\bm N}_i^e-\hat{\bm N}_{c,i}^e)\times \hat{\bm n}_{\partial \hat{K}}\big)\d \hat A,
		\end{align*}
		with ${\bm q}\circ F_K = B_K^{-T}\hat {\bm q}$.
		After a direct calculation, we  derive
		$a_i=0$.
		Similarly, 
				\begin{align*}
			b_{i,j}=\int_{\partial K}\bm q\cdot (\bm N_{i,j}^f\times \bm n_{\partial K})\d A = \int_{\partial{\hat K}}\hat {\bm q}\cdot ( \hat{\bm N}_{i,j}^f\times \hat{\bm n}_{\partial \hat K})\d \hat A=0
		\end{align*}
  with ${\bm q}\circ F_K = \frac{\|B_K\bm\tau_{f_i}^j\|}{\|B_K\bm n_{f_i}\|}B_K^{-T}\hat {\bm q}$. 
	\end{proof}

	Applying Theorem \ref{converg2} and Lemma \ref{jump-property}, we can prove the convergence results for \eqref{prob22}.
	\begin{theorem}\label{errorestimate-uh}
		Let $(\bm{u};p)\in H_0(\grad\curl;\Omega)\times H_0^1(\Omega)$ be the solution of the variational problem \eqref{prob2} and $(\bm{u}_{h};p_{h})\in \mathring{\bm{V}}^{r-1}_{h}(\mathcal T_h)\times \mathring S^r_{h}(\mathcal T_h)$ be the solution of the discrete problem \eqref{prob22}. Suppose that $\bm{u} \in \bm H^{r}(\Omega)$ and  $\curl \bm u\in \bm H^2(\Omega)$, then there holds  $p=p_h=0$ and 
		\begin{equation}\label{err:u:anorm}
			\|\bm u-\bm u_h\|_{\grad\curl,h}\leq Ch\big(h^{r-1}|\bm{u}|_{r}+\alpha^{1/2}|\curl\bm u|_{1}+\alpha^{-1/2}\|\bm f\|+\beta\alpha^{-1/2}\|\bm u\|+(\epsilon^{1/2}+h)|\curl\bm u|_2\big).
		\end{equation}
	\end{theorem}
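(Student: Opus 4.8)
The plan is to start from the abstract bound \eqref{err-abs} of Theorem \ref{converg2}, which already splits $\|\bm u-\bm u_h\|_{\grad\curl,h}$ into a best-approximation part and a consistency part, and to estimate these separately; the conclusion $p=p_h=0$ is inherited directly from Theorem \ref{converg2}. For the approximation part I would insert $\bm v_h=\bm R_h^{r-1}\bm u$, which lies in $\mathring{\bm V}^{r-1}_h(\mathcal T_h)$ since $\bm u$ has vanishing tangential trace and vanishing $\curl$ on $\partial\Omega$. Expanding the three pieces of $\|\cdot\|_{\grad\curl,h}$ and applying Theorem \ref{Interp_err_R} with $s=r$, $l=2$ gives $\|\bm u-\bm R_h^{r-1}\bm u\|\le Ch^{r}(|\bm u|_r+|\curl\bm u|_r)$ from \eqref{Interp_err_R0}, $\|\curl_h(\bm u-\bm R_h^{r-1}\bm u)\|\le Ch^{2}|\curl\bm u|_2$ from \eqref{Interp_err_R1} with $j=0$, and $\epsilon^{1/2}|\curl_h(\bm u-\bm R_h^{r-1}\bm u)|_{1,h}\le C\epsilon^{1/2}h|\curl\bm u|_2$ from \eqref{Interp_err_R1} with $j=1$. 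This reproduces the terms $h^{r}|\bm u|_r$ and $(\epsilon^{1/2}+h)h|\curl\bm u|_2$ of \eqref{err:u:anorm}.

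For the consistency part $\sup_{\bm v_0\in X_h}E_h(\bm u,\bm v_0)/\|\bm v_0\|_{a_h}$ I would treat the three face terms of $E_h$ from Theorem \ref{converg2} one at a time. In the first ($\epsilon$-)term I would exploit that $\curl_h\bm v_0\in\bm W_h(\mathcal T_h)$, so each jump $[\![\curl_h\bm v_0]\!]$ has zero average on every face; subtracting the face mean of $\partial_{\bm n_f}\curl\bm u$ and applying a trace/Bramble--Hilbert estimate to both factors yields a bound $C\epsilon h|\curl\bm u|_2\,|\curl_h\bm v_0|_{1,h}$, and since $\|\bm v_0\|_{a_h}\ge\epsilon^{1/2}|\curl_h\bm v_0|_{1,h}$ this contributes $C\epsilon^{1/2}h|\curl\bm u|_2$. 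The second ($\alpha$-)term is where Lemma \ref{jump-property} is indispensable: because $\bm I_h^{r-1}\bm v_0$ is $H(\curl)$-conforming, $[\![\bm v_0\times\bm n_f]\!]=[\![(\bm v_0-\bm I_h^{r-1}\bm v_0)\times\bm n_f]\!]$, so the term rewrites as a sum of elementwise boundary integrals of $\curl\bm u$ against $(\bm v_0-\bm I_h^{r-1}\bm v_0)\times\bm n_{\partial K}$; property \eqref{jump-pro} lets me subtract the constant $\overline{\curl\bm u}_K\in\bm P_0(K)$, after which a trace estimate combined with \eqref{L2estimate} and an inverse inequality gives $C\alpha h|\curl\bm u|_1\|\curl_h\bm v_0\|$, i.e.\ $C\alpha^{1/2}h|\curl\bm u|_1$ after dividing by $\|\bm v_0\|_{a_h}\ge\alpha^{1/2}\|\curl_h\bm v_0\|$. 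Without \eqref{jump-pro} no constant may be subtracted and the crucial factor $h$ is lost.

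The last term, $\pm\epsilon\sum_f\langle\Delta\curl\bm u,[\![\bm v_0\times\bm n_f]\!]\rangle_f$, is the main obstacle: under the stated regularity $\curl\bm u\in\bm H^2(\Omega)$ the field $\Delta\curl\bm u$ lies only in $\bm L^2$ and has no face trace, so the jump-property argument of the previous paragraph cannot be repeated directly. I would instead avoid the trace entirely by reversing the integration by parts that generated this term, rewriting it as the volume expression $\epsilon(\curl\Delta\curl\bm u,\bm v_0)-\epsilon(\Delta\curl\bm u,\curl_h\bm v_0)$. Using the equation \eqref{OriginProblem} to replace $\epsilon\curl\Delta\curl\bm u$ by $\alpha\curl\curl\bm u+\beta\bm u-\bm f$, and integrating by parts back onto the conforming interpolant $\bm I_h^{r-1}\bm v_0$ (whose tangential trace vanishes on $\partial\Omega$, so the surface pairing dies, legitimate since $\curl\Delta\curl\bm u\in\bm L^2$ places $\Delta\curl\bm u$ in $H(\curl;\Omega)$), I would re-center everything on $\bm v_0-\bm I_h^{r-1}\bm v_0$, arriving at $(\alpha\curl\curl\bm u+\beta\bm u-\bm f,\bm v_0-\bm I_h^{r-1}\bm v_0)-\epsilon(\Delta\curl\bm u,\curl_h(\bm v_0-\bm I_h^{r-1}\bm v_0))$.

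At this point the interpolation estimates \eqref{L2estimate} and \eqref{curlestimate}, now applied to $\bm v_0-\bm I_h^{r-1}\bm v_0$, supply the decisive power of $h$: the first piece is $\le Ch(\alpha\|\curl\curl\bm u\|+\beta\|\bm u\|+\|\bm f\|)\|\curl_h\bm v_0\|$ and the second is $\le C\epsilon h|\curl\bm u|_2\,|\curl_h\bm v_0|_{1,h}$, which after division by $\|\bm v_0\|_{a_h}$ yield precisely the remaining terms $h(\alpha^{1/2}|\curl\bm u|_1+\beta\alpha^{-1/2}\|\bm u\|+\alpha^{-1/2}\|\bm f\|)$ (using $\|\curl\curl\bm u\|\le|\curl\bm u|_1$) and $h\epsilon^{1/2}|\curl\bm u|_2$. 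Collecting the approximation and consistency contributions and passing from $\|\cdot\|_{a_h}$ to $\|\cdot\|_{\grad\curl,h}$ through the coercivity \eqref{coer} gives \eqref{err:u:anorm}. I expect the delicate points to be exactly (i) the legitimacy of the integration by parts for the low-regularity field $\Delta\curl\bm u\in H(\curl;\Omega)$, which is what forces the volume reformulation rather than a naive trace estimate, and (ii) the careful bookkeeping of the $\alpha$-powers so that each consistency term is measured against the matching component of $\|\bm v_0\|_{a_h}$.
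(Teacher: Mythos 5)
Your proposal follows essentially the same route as the paper's proof: the abstract bound of Theorem \ref{converg2}, the interpolant $\bm R_h^{r-1}\bm u$ with Theorem \ref{Interp_err_R} for the best-approximation part, the face-mean subtraction enabled by the single-valued face averages of $\curl_h\bm v_0$ for the $\epsilon$-term, Lemma \ref{jump-property} together with the conformity of $\bm I_h^{r-1}\bm v_0$ for the $\alpha$-term, and the volume reformulation via the PDE combined with \eqref{curlestimate} and \eqref{L2estimate} for the $\epsilon\Delta\curl\bm u$-term. The only cosmetic difference is that for the $\alpha$-term you invoke a trace plus inverse inequality where the paper integrates by parts back into each element, but both yield the same bound $C\alpha h|\curl\bm u|_1\|\curl_h\bm v_0\|$.
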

	\begin{proof}
		We use the right-hand side of \eqref{err-abs} to estimate \eqref{err:u:anorm}. Due to  $\bm R^{r-1}_h \bm u \in \mathring{\bm V}^{r-1}_h(\mathcal T_h)$ and  the error estimates of the interpolation $\bm R^{r-1}_h \bm u$ \eqref{Interp_err_R0}--\eqref{Interp_err_R1}, we have
		\begin{align}\label{int-err-1}
			&\inf_{\bm v_h\in \mathring{\bm V}^{r-1}_h(\mathcal T_h)}\|\bm u-\bm v_h\|_{\grad\curl,h}\le \|\bm u- \bm R_h^{r-1} \bm u\|_{\grad\curl,h}\nonumber\\
			\le& Ch(h^{r-1}|\bm u|_r+h|\curl \bm u|_2+\epsilon^{1/2}|\curl \bm u|_2).
		\end{align}
		To estimate the consistency error term $E_h(\bm u,\bm v_0)$, we estimate each term in $E_h(\bm u,\bm v_0)$. Firstly,
		\begin{align}\label{I1}
				&\bigg|\sum_{f\in \mathcal{F}_h}\epsilon\langle{\partial_{\bm n_f} (\curl\bm u)}, [\![\curl_h\bm v_0]\!]\rangle_f\bigg|\nonumber\\
				=&\bigg|\sum_{f\in \mathcal{F}_h}\epsilon\langle{\partial_{\bm n_f} (\curl\bm u)}, [\![\curl_h\bm v_0]\!]\rangle_f-\sum_{f\in \mathcal{F}_h}\epsilon\langle\mathcal P_{f}{\partial_{\bm n_f} (\curl\bm u)}, [\![\curl_h\bm v_0]\!]\rangle_f\bigg|\nonumber\\
		    =&\epsilon\bigg|\sum_{K\in\mathcal{T}_h}\sum_{f\subset\partial K}\int_f\left( {\partial_{\bm n_{\partial K}} (\curl\bm u)} -\mathcal P_{f}{\partial_{\bm n_{\partial K}}(\curl\bm u)}\right)\cdot\curl\bm v_0\d A\bigg|\nonumber\\
				=&\epsilon\bigg|\sum_{K\in\mathcal{T}_h}\sum_{f\subset\partial K}\int_f\left( {\partial_{\bm n_{\partial K}} (\curl\bm u)} -\mathcal P_{f}{\partial_{\bm n_{\partial K}} (\curl\bm u)}\right)\cdot\left(\curl\bm v_0-\mathcal P_{f}\curl\bm v_0\right)\d A\bigg|\nonumber\\
				\le&C\epsilon h |\curl\bm u|_2|\curl_h \bm v_0|_{1,h}
				\le C\epsilon^{1/2} h |\curl\bm u|_2\|\bm  v_0\|_{a_h}.
		\end{align}
		Here and hereafter $\bm n_{\partial K}$ is unit outward normal vector to $\partial K$ and $\mathcal P_{f}$ is the $L^2$ projection to $P_0(f)$. The projection $\mathcal P_{f}$ satisfies
		\begin{align}\label{faceproj}
		    \|w-\mathcal P_{f}w\|_f\leq \|w-\mathcal P_K w\|_f\leq Ch^{-1/2}\Big(\|w-\mathcal P_{K} w\|_{K}+h|w-\mathcal P_K w|_{1,K}\Big)\leq Ch^{1/2}|w|_{1,K}.
		\end{align}
		To estimate the second term of the consistency error, we apply 
		Lemma \ref{jump-property} to get
		\begin{align}\label{I2}
				&\bigg|\sum_{f\in\mathcal F_h}\langle\alpha\curl\bm u, [\![\bm v_0\times \bm n_f]\!]\rangle_f\bigg|=\alpha\bigg|\sum_{f\in\mathcal F_h}\langle\curl\bm u, [\![(\bm v_0-\bm I_h^{r-1}\bm v_0)\times \bm n_f]\!]\rangle_f\bigg|\nonumber\\
				=&\alpha\bigg| \sum_{K\in\mathcal{T}_h} \int_{\partial K}  (\curl\bm u-\mathcal P_{K}\curl\bm u)\cdot ((\bm v_0-\bm I_h^{r-1}\bm v_0)\times \bm n_{\partial K})\d A\bigg|\nonumber\\
				=&\alpha \sum_{K\in\mathcal{T}_h}\bigg|\int_K  (\curl\bm u-\mathcal P_{K}\curl\bm u)\cdot \curl(\bm v_0-\bm I_h^{r-1}\bm v_0)\d V\bigg|\nonumber\\ 
				 &+\alpha \sum_{K\in\mathcal{T}_h}\bigg|\int_K  \curl(\curl\bm u-\mathcal P_{K}\curl\bm u)\cdot (\bm v_0-\bm I_h^{r-1}\bm v_0)\d V\bigg|\nonumber\\
				\le& C \alpha h |\curl \bm u|_1\|\curl_h\bm v_0\|
			\le C \alpha^{1/2}h |\curl \bm u|_1\|\bm  v_0\|_{a_h},
		\end{align}
			To estimate the last term of the consistency error, we follow an analogous idea in \cite{Huang2020Nonconforming} and have
		\begin{align}\label{I3}
				&\bigg|\sum_{f\in \mathcal{F}_h}\langle \epsilon\Delta\curl\bm u,[\![\bm v_0\times \bm n_f]\!]\rangle_f\bigg|=\bigg|\sum_{f\in \mathcal{F}_h}\langle \epsilon\Delta\curl\bm u,[\![(\bm v_0-\bm I_h\bm v_0)\times \bm n_f]\!]\rangle_f\bigg|\nonumber\\
				=&\bigg|\sum_{K\in \mathcal{T}_h}\langle \epsilon\Delta\curl\bm u,(\bm v_0-\bm I_h\bm v_0)\times \bm n_{\partial K}\rangle_{\partial K}\bigg|\nonumber\\
				\le &\sum_{K\in \mathcal{T}_h}\big|\left(\epsilon\curl\Delta\curl \bm u,(\bm v_0-\bm I_h\bm v_0)\right)_K-\left(\epsilon\Delta\curl \bm u,\curl (\bm v_0-\bm I_h\bm v_0)\right)_K\big|\nonumber\\
				\overset{\eqref{OriginProblem}}=&\sum_{K\in \mathcal{T}_h}\big|\left(\alpha\curl\curl \bm u+\beta\bm u-\bm f,\bm v_0-\bm I_K \bm v_0\right)_K-\left(\epsilon\Delta\curl \bm u,\curl (\bm v_0-\bm I_K \bm v_0)\right)_K\big|\nonumber\\
				\le &\alpha h|\curl \bm u|_1\|\curl_h\bm v_0\|+\beta h\| \bm u\|\|\curl_h\bm v_0\|+Ch\|\bm f\|\|\curl_h\bm v_0\|+Ch\epsilon|\curl \bm u|_2|\curl_h \bm v_0|_{1,h}\nonumber\\
				\le &Ch\left(\alpha^{-1/2}\|\bm f\|+\alpha^{1/2} |\curl \bm u|_1+{\beta}{\alpha^{-1/2}}\| \bm u\|+\epsilon^{1/2}|\curl \bm u|_2\right)\| \bm v_0\|_{a_h}.
		\end{align}
		where we have used \eqref{curlestimate} and \eqref{L2estimate}.

Now we  can obtain \eqref{err:u:anorm} by combining \eqref{int-err-1}, \eqref{I1}, \eqref{I2}, and \eqref{I3}.
	\end{proof}
	
	\section{Uniform Error Estimates}
	In Theorem \ref{errorestimate-uh}, we assume $|\curl\bm u|_2$ are bounded. However, we can not expect  that the bound of $|\curl\bm u|_2$ is independent of $\epsilon$. To reveal this, we consider the following second-order problem:
	\begin{align}\label{red-pro}
		\begin{split}
			\curl^2\widetilde{\bm u}+\widetilde{\bm u}&=\bm f\quad\text{in}\ \Omega,\\
			\div \widetilde{\bm u}&=0\quad\text{in}\ \Omega,\\
			\widetilde{	\bm u}\times \bm n&=0\quad\text{on}\ \partial\Omega.
		\end{split}
	\end{align}
	We assume $\widetilde{\bm u}$ satisfies the following regularity estimate
	\begin{align}\label{regularity-0}
		\|\widetilde{\bm u}\|_1+\|\curl \widetilde{\bm u}\|_1\le C\|\bm f\|.
	\end{align}
Without loss of generality, we assume $\alpha=\beta=1$ in this section. Let $\bm u$ be the solution of problem \eqref{OriginProblem}.
	According to the proof of \cite[Lemma 3.1 -- 3.2]{huang2022robust}, we can prove
	\begin{align}\label{regularity-1}
		&\|\bm u\|+\|\curl \bm u\|+\epsilon^{1/2}|\curl \bm u|_1\le C\|\bm f\|,\\\label{regularity-2}
		&\epsilon|\curl \bm u|_2+\epsilon^{1/2}|\curl \bm u|_1+\|\bm u-\widetilde{\bm u}\|_1\le C\epsilon^{1/4}\|\bm f\|.
	\end{align}
 
 From \eqref{regularity-2}, we can see the bound of $|\curl \bm u|_2$ will blow up as $\epsilon$ approaches 0, in which case the estimate \ref{errorestimate-uh} would fail. In this section, we will provide a uniform error estimate with respect to $\epsilon$. 
	
	Subtracting \eqref{red-pro} from \eqref{OriginProblem} leads to
	\[-\epsilon\curl\Delta\curl \bm u+\curl\curl(\bm u-\widetilde{\bm u})+\bm u-\widetilde{\bm u}=0.\]
	For $\bm v\in H_0(\curl;\Omega)$, by using integration by parts,  we can get
	\begin{align}\label{ax-1}
	(\widetilde{\bm u}-\bm u,\bm v)
	=	-\epsilon(\curl\Delta\curl \bm u,\bm v)+(\curl\curl(\bm u-\widetilde{\bm u}),\bm v)\nonumber\\
	=	-\epsilon(\Delta\curl \bm u,\curl \bm v)+(\curl(\bm u-\widetilde{\bm u}),\curl \bm v).
	\end{align}
	Before we present the uniform error estimate, we first prove some new approximation properties of $\bm R_h^{r-1}$, which is based on the following boundedness of the interpolation operators $\bm{R}^{r-1}_{\hat K}$ and $\bm{\Pi}_{\hat K}$:
	\begin{align}
		&\|\bm R_{\hat K}^{r-1} \hat{\bm v}\|_{\hat K}\le \|\hat{\bm v}\|_{1,\hat K}+\|\curl\hat{\bm v}\|_{\partial \hat K}\le \|\hat{\bm v}\|_{1, \hat K}+\|\curl \hat{\bm v}\|_{\hat K}^{1/2}\|\curl \hat{\bm v}\|_{1,\hat K}^{1/2},\label{boundedness-Rh}\\
		&\|{\bm{\Pi}}_{\hat K}\hat{\bm v}\|_{\hat K}\le C\|\hat{\bm v}\|_{\partial \hat K}\le C\|\hat{\bm v}\|_{ \hat K}^{1/2}\|\hat{\bm v}\|_{1,\hat  K}^{1/2}.\label{boundedness-Pih}
	\end{align}
Here we have used the trace inequality \cite[Theorem 1.6.6]{brenner2008mathematical}
\begin{align}\label{traceinq2}
    \|\hat u\|_{\partial \hat K}\leq \|\hat u\|^{1/2}_{\hat K}\|\hat u\|_{1,\hat K}^{1/2},
\end{align}
and the first inequality of \eqref{boundedness-Rh} is a direct result of the proof of Lemma \ref{boundednessofDOF}. 
	\begin{lemma}
	    		If $\bm{u}\in \bm H^{1}(K)$, $\curl\bm{u}\in \bm H^{2}(K)$, there hold the following error estimates
		\begin{align}
			\|\bm{u}-\bm{R}^{r-1}_K\bm{u}\|_K&\leq Ch(\|\bm{u}\|_{1,K}+\|\curl\bm u\|^{1/2}_K\|\curl \bm u\|_{1,K}^{1/2}),\label{Interp_err_R0-1}\\
			\|\curl(\bm{u}-\bm{R}^{r-1}_K\bm{u})\|_{j,K}&\leq Ch^{1/2}\|\curl\bm{u}\|_{j,K}^{1/2}|\curl\bm{u}|_{j+1,K}^{1/2},\ j=0,1.\label{Interp_err_R1-1}
		\end{align}
	\end{lemma}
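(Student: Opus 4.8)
The plan is to prove both estimates by the reference-element scaling argument used for Theorem \ref{Interp_err_R}, but to carry the square-root (geometric-mean) structure all the way through; the two reference bounds \eqref{boundedness-Rh} and \eqref{boundedness-Pih} are precisely what supply these square roots. Throughout I use that $B_K$ is diagonal with entries comparable to $h$, so under the covariant transform \eqref{tran} one has $\|\bm v\|_K\sim h^{1/2}\|\hat{\bm v}\|_{\hat K}$ and $|\bm v|_{1,K}\sim h^{-1/2}|\hat{\bm v}|_{1,\hat K}$, whereas $\curl\bm v$ obeys the contravariant rule $\curl\bm v\circ F_K=\tfrac{1}{\det B_K}B_K\,\widehat{\curl}\,\hat{\bm v}$ (already used in Lemma \ref{hatPi}), giving $\|\curl\bm v\|_K\sim h^{-1/2}\|\widehat{\curl}\,\hat{\bm v}\|_{\hat K}$, $|\curl\bm v|_{1,K}\sim h^{-3/2}|\widehat{\curl}\,\hat{\bm v}|_{1,\hat K}$, and $|\curl\bm v|_{2,K}\sim h^{-5/2}|\widehat{\curl}\,\hat{\bm v}|_{2,\hat K}$. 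Lemma \ref{hatPi} gives $\widehat{\bm R^{r-1}_K\bm u}=\bm R^{r-1}_{\hat K}\hat{\bm u}$, and combining it with the commuting relation of Lemma \ref{commuting} ($\curl\bm R^{r-1}_K\bm u=\bm\Pi_K\curl\bm u$) yields $\widehat{\curl(\bm u-\bm R^{r-1}_K\bm u)}=\widehat{\curl}\,\hat{\bm u}-\bm\Pi_{\hat K}\big(\widehat{\curl}\,\hat{\bm u}\big)$.

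For \eqref{Interp_err_R0-1} I would use $\bm P_0(K)\subset\bm P_{r-1}(K)\subset\bm V^{r-1}_h(K)$ to write, for the $\hat K$-average $\hat{\bm c}$ of $\hat{\bm u}$, $\hat{\bm u}-\bm R^{r-1}_{\hat K}\hat{\bm u}=(\hat{\bm u}-\hat{\bm c})-\bm R^{r-1}_{\hat K}(\hat{\bm u}-\hat{\bm c})$, and then apply \eqref{boundedness-Rh} to the second summand. Since $\curl\hat{\bm c}=\bm 0$, the curl factor is exactly $\|\widehat{\curl}\,\hat{\bm u}\|_{\hat K}^{1/2}\|\widehat{\curl}\,\hat{\bm u}\|_{1,\hat K}^{1/2}$, while Poincar\'e bounds $\|\hat{\bm u}-\hat{\bm c}\|_{1,\hat K}$ by $C|\hat{\bm u}|_{1,\hat K}$. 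Scaling each factor back (covariant for the first term, contravariant for the curl terms) turns the resulting $h^{1/2}\big(|\hat{\bm u}|_{1,\hat K}+\|\widehat{\curl}\,\hat{\bm u}\|_{\hat K}^{1/2}\|\widehat{\curl}\,\hat{\bm u}\|_{1,\hat K}^{1/2}\big)$ into the claimed $Ch\big(\|\bm u\|_{1,K}+\|\curl\bm u\|_K^{1/2}\|\curl\bm u\|_{1,K}^{1/2}\big)$; the extra half power of $h$ beyond the bare $L^2$-norm scaling comes from the seminorms appearing on the right.

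For \eqref{Interp_err_R1-1} I would set $\bm\phi:=\curl\bm u$ and reduce, via the scalings above, to the reference estimate $\|\hat{\bm\phi}-\bm\Pi_{\hat K}\hat{\bm\phi}\|_{j,\hat K}\le C\|\hat{\bm\phi}\|_{j,\hat K}^{1/2}|\hat{\bm\phi}|_{j+1,\hat K}^{1/2}$ for $j=0,1$. Since $[P_1]^3\subset\bm W_h(\hat K)$, $\bm\Pi_{\hat K}$ reproduces the degree-$j$ averaged Taylor polynomial $\hat{\bm p}_j$ of $\hat{\bm\phi}$, so $\hat{\bm\phi}-\bm\Pi_{\hat K}\hat{\bm\phi}=(\hat{\bm\phi}-\hat{\bm p}_j)-\bm\Pi_{\hat K}(\hat{\bm\phi}-\hat{\bm p}_j)$. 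For the first summand I would bound $\|\hat{\bm\phi}-\hat{\bm p}_j\|_{j,\hat K}$ simultaneously by $C\|\hat{\bm\phi}\|_{j,\hat K}$ (boundedness of the Taylor projection) and by $C|\hat{\bm\phi}|_{j+1,\hat K}$ (Bramble--Hilbert), and then use $\min(a,b)\le\sqrt{ab}$ to recover the geometric mean; for the second summand I would apply \eqref{boundedness-Pih} (for $j=1$ preceded by the reference inverse inequality $\|\bm\Pi_{\hat K}\hat{\bm v}\|_{1,\hat K}\le C\|\bm\Pi_{\hat K}\hat{\bm v}\|_{\hat K}$, valid because the interpolant is a polynomial on the fixed cube), bounding one factor by $C\|\hat{\bm\phi}\|_{j,\hat K}$ and the other by $C|\hat{\bm\phi}|_{j+1,\hat K}$. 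Scaling back by the contravariant rule then produces \eqref{Interp_err_R1-1}.

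The main obstacle is not any single inequality but keeping the half-power/product structure intact. The naive route of subtracting a polynomial and invoking Poincar\'e on \emph{every} factor collapses $\|\hat{\bm\phi}\|_{j,\hat K}^{1/2}|\hat{\bm\phi}|_{j+1,\hat K}^{1/2}$ down to the pure seminorm $|\hat{\bm\phi}|_{j+1,\hat K}$, which scales to the classical $\mathcal O(h)$ bound of Theorem \ref{Interp_err_R} rather than the $\mathcal O(h^{1/2})$ geometric-mean bound that survives when $|\curl\bm u|_2$ blows up as $\epsilon\to0$. The correct bookkeeping is to spend the trace inequality \eqref{traceinq2} (through \eqref{boundedness-Rh} and \eqref{boundedness-Pih}) on exactly one factor while reserving the Bramble--Hilbert/contraction pair for the other, and to track the covariant and contravariant anisotropic scalings separately so that the powers of $h$ assemble to $h$ in \eqref{Interp_err_R0-1} and to $h^{1/2}$ in \eqref{Interp_err_R1-1}.
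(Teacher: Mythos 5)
Your treatment of \eqref{Interp_err_R0-1} is essentially the paper's own proof: pull back via \eqref{tran}, exploit $\bm P_0(\hat K)\subset\bm V_h^{r-1}(\hat K)$ to subtract a constant, apply the boundedness \eqref{boundedness-Rh} (noting the curl factor is unaffected by the constant shift), invoke Deny--Lions/Poincar\'e on the remaining $H^1$ factor, and scale back; the paper phrases the constant subtraction as an infimum over $\bm P_0(\hat K)$ and cites \cite[Theorem 5.5]{Monk2003}, but the content is identical. The paper disposes of \eqref{Interp_err_R1-1} with one sentence (``similarly, by \eqref{boundedness-Pih}''), so your fleshed-out argument there via $\curl\bm R_K^{r-1}\bm u=\bm\Pi_K\curl\bm u$ and the averaged Taylor polynomial is welcome --- but it contains a concrete scaling error in the case $j=1$.

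The reference estimate you propose, $\|\hat{\bm\phi}-\bm\Pi_{\hat K}\hat{\bm\phi}\|_{1,\hat K}\le C\|\hat{\bm\phi}\|_{1,\hat K}^{1/2}|\hat{\bm\phi}|_{2,\hat K}^{1/2}$, does not scale back to \eqref{Interp_err_R1-1}. Under the contravariant rule the pieces of the full $H^1(\hat K)$ norm scale inhomogeneously: $\|\hat{\bm\phi}\|_{\hat K}\sim h^{1/2}\|\bm\phi\|_K$ while $|\hat{\bm\phi}|_{1,\hat K}\sim h^{3/2}|\bm\phi|_{1,K}$, so $\|\hat{\bm\phi}\|_{1,\hat K}$ is controlled only by $h^{1/2}\|\bm\phi\|_{1,K}$, whereas the error on the left satisfies merely $\|\hat{\bm\phi}-\bm\Pi_{\hat K}\hat{\bm\phi}\|_{1,\hat K}\ge h^{3/2}\|\bm\phi-\bm\Pi_K\bm\phi\|_{1,K}$. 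Dividing, you obtain $\|\bm\phi-\bm\Pi_K\bm\phi\|_{1,K}\le C\|\bm\phi\|_{1,K}^{1/2}|\bm\phi|_{2,K}^{1/2}$ with no factor $h^{1/2}$ --- exactly the collapse you warn against in your closing paragraph. The cure is to prove the \emph{seminorm} version on the reference cube, $\|\hat{\bm\phi}-\bm\Pi_{\hat K}\hat{\bm\phi}\|_{1,\hat K}\le C|\hat{\bm\phi}|_{1,\hat K}^{1/2}|\hat{\bm\phi}|_{2,\hat K}^{1/2}$, which then scales to $Ch^{1/2}|\bm\phi|_{1,K}^{1/2}|\bm\phi|_{2,K}^{1/2}\le Ch^{1/2}\|\bm\phi\|_{1,K}^{1/2}|\bm\phi|_{2,K}^{1/2}$ as claimed. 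Your own ingredients already deliver this: since differentiation commutes with the averaged Taylor operator, the degree-one remainder obeys $\|\hat{\bm\phi}-\hat{\bm p}_1\|_{1,\hat K}\le C\min\bigl(|\hat{\bm\phi}|_{1,\hat K},|\hat{\bm\phi}|_{2,\hat K}\bigr)$ (not just $\le C\|\hat{\bm\phi}\|_{1,\hat K}$), and combined with $\min(a,b)\le\sqrt{ab}$, the bound \eqref{boundedness-Pih}, and the inverse inequality on the fixed cube, the geometric mean of the two seminorms comes out. With that one substitution your argument is complete; the $j=0$ case and \eqref{Interp_err_R0-1} are correct as written.
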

	\begin{proof}

 Again, we follow the proof of \cite[Theorem 5.41]{Monk2003} and use the transformation \eqref{tran} to have
		\begin{align*}
			\|\bm u-\bm R_{ K}^{r-1} \bm u\|_K^2&\leq h\inf_{\hat{\bm p}\in \bm P_0(\hat K)}\int_{\hat K}\left((\hat{\bm u}+\hat{\bm p})-\bm R_{\hat K}^{r-1}( \hat{\bm u}+\hat{\bm p})\right)^2\d\hat  V.
		\end{align*}
We then apply	\eqref{boundedness-Rh} and \cite[Theorem 5.5]{Monk2003} to derive
\begin{align*}
\|\bm u-\bm R_{ K}^{r-1} \bm u\|_K^2	&\le h\inf_{\hat{\bm p}\in \bm P_0(\hat K)}\left(\|\hat {\bm u}+\hat{\bm p}\|_{1,\hat K}^2+\|\widehat{\curl}(\hat {\bm u}+\hat{\bm p})\|_{\hat K}\| \widehat{\curl}(\hat {\bm u}+\hat{\bm p})\|_{1,\hat K}\right)\\
			&=h\left(\inf_{\hat{\bm p}\in \bm P_0(\hat K)}\|\hat {\bm u}+\hat{\bm p}\|_{1,\hat K}^2+\|\widehat{\curl}\hat {\bm u}\|_{\hat K}\| \widehat{\curl}\hat {\bm u}\|_{1,\hat K}\right)\\
			&\le h\left(|\hat {\bm u}|_{1,\hat K}^2+\|\widehat{\curl}\hat {\bm u}\|_{\hat K}\| \widehat{\curl}\hat {\bm u}\|_{1,\hat K}\right)\\
			&\le h^2\left(| {\bm u}|_{1,K}^2+\| \curl{\bm u}\|_K\| \curl {\bm u}\|_{1,K}\right).
\end{align*}
Similarly, we can prove \eqref{Interp_err_R1-1}  by \eqref{boundedness-Pih} for $j=0,1$. 

	\end{proof}
\begin{lemma} Given $K\in \mathcal T_h$, let $f\in \mathcal F_h(K)$. The $L^2$ projection $\mathcal P_{f}$ has the following estimate:
\[\|w - \mathcal P_{f}w\|_f\leq C\|w\|_K^{1/2}\|w\|_{1,K}^{1/2}, \quad w\in H^1(K).\]
\end{lemma}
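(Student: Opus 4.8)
\emph{Plan.} The asserted bound is of multiplicative trace type, so I would prove it by pulling back to the reference cube $\hat K=(-1,1)^3$, establishing the scale-invariant form there, and mapping back, exactly in the spirit of the two preceding lemmas. The one structural feature that has to be used is that $\mathcal P_f$ reproduces constants; exploiting this to subtract the element mean is precisely what makes the argument close.

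First I would pull $w$ back to $\hat w=w\circ F_K$ on $\hat K$ and record how $\|\cdot\|_f$, $\|\cdot\|_K$ and $|\cdot|_{1,K}$ transform under \eqref{tran}; since $\mathcal T_h$ is shape-regular with $B_K$ diagonal, all local lengths are comparable to $h_K$, so the Jacobians are controlled by powers of $h$ with constants depending only on shape regularity. Because the constant $\mathcal P_f w$ pulls back to $\mathcal P_{\hat f}\hat w$, the difference $w-\mathcal P_f w$ pulls back to $\hat w-\mathcal P_{\hat f}\hat w$, and hence $\|w-\mathcal P_f w\|_f=(|f|/|\hat f|)^{1/2}\,\|\hat w-\mathcal P_{\hat f}\hat w\|_{\hat f}$. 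This reduces everything to an $h$-free inequality on $\hat K$.

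On $\hat K$ I would run the following chain. Let $\bar{\hat w}$ denote the mean of $\hat w$ over $\hat K$. Since $\mathcal P_{\hat f}$ fixes constants, $\hat w-\mathcal P_{\hat f}\hat w=(\hat w-\bar{\hat w})-\mathcal P_{\hat f}(\hat w-\bar{\hat w})$, and as $\mathcal P_{\hat f}$ is the $L^2(\hat f)$-orthogonal projection it is a contraction, giving $\|\hat w-\mathcal P_{\hat f}\hat w\|_{\hat f}\le\|\hat w-\bar{\hat w}\|_{\hat f}\le\|\hat w-\bar{\hat w}\|_{\partial\hat K}$. Applying the multiplicative trace inequality \eqref{traceinq2} to $\hat w-\bar{\hat w}$, and then the Poincar\'e inequality for mean-zero functions (which bounds $\|\hat w-\bar{\hat w}\|_{1,\hat K}$ by $C|\hat w|_{1,\hat K}$), I get $\|\hat w-\bar{\hat w}\|_{\partial\hat K}\le\|\hat w-\bar{\hat w}\|_{\hat K}^{1/2}\|\hat w-\bar{\hat w}\|_{1,\hat K}^{1/2}\le C\|\hat w\|_{\hat K}^{1/2}|\hat w|_{1,\hat K}^{1/2}$, where the last step also uses $\|\hat w-\bar{\hat w}\|_{\hat K}\le\|\hat w\|_{\hat K}$.

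Finally I would scale back, and here lies the only real obstacle. It is essential that the reference bound ends in the \emph{seminorm} $|\hat w|_{1,\hat K}$ rather than the full $H^1(\hat K)$ norm: the seminorm carries a single clean power of $h$ under \eqref{tran}, which cancels exactly against the boundary/volume Jacobians, whereas the full norm mixes two different powers of $h$ and would leave a spurious factor $h^{-1/2}$. With the seminorm, rescaling the three quantities $\|\hat w-\mathcal P_{\hat f}\hat w\|_{\hat f}$, $\|\hat w\|_{\hat K}$ and $|\hat w|_{1,\hat K}$ makes the powers of $h$ balance, and I obtain $\|w-\mathcal P_f w\|_f\le C\|w\|_K^{1/2}|w|_{1,K}^{1/2}\le C\|w\|_K^{1/2}\|w\|_{1,K}^{1/2}$. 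I would emphasize that subtracting the mean is indispensable: applying \eqref{traceinq2} to $w$ directly, via $\|w-\mathcal P_f w\|_f\le\|w\|_{\partial K}$, produces an uncontrollable $h^{-1}\|w\|_K^2$ term and fails for nearly constant $w$, so the use of the constant-reproducing property of $\mathcal P_f$ is what both cures this defect and restores the correct scaling.
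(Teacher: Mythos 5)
Your proposal is correct and follows essentially the same route as the paper: pull back to $\hat K$, use the fact that $\mathcal P_{\hat f}$ reproduces constants to compare against the element mean (the paper phrases this as $\|\hat w-\mathcal P_{\hat f}\hat w\|_{\hat f}\le C\|\hat w-\mathcal P_{\hat K}\hat w\|_{\hat f}$ followed by an infimum over $P_0(\hat K)$ and a citation of Monk's Theorem 5.5, where you use the contraction property of the orthogonal projection plus Poincar\'e for mean-zero functions), then apply the multiplicative trace inequality \eqref{traceinq2} and scale back. The scaling bookkeeping you describe, ending in $\|w\|_K^{1/2}|w|_{1,K}^{1/2}$ on the physical element, matches the paper's final line.
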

\begin{proof}\label{p0f-bound}
For $w\in H_1(K)$, we define $\hat w = w\circ F_K$.
By scaling argument, \eqref{traceinq2}, and \cite[Theorem 5.5]{Monk2003}, we have
\begin{align*}
&\|w - \mathcal P_{f}w\|^2_f\leq h^2\| \hat w - \mathcal P_{\hat f}\hat w\|^2_{\hat f}\leq Ch^2\|\hat w - \mathcal P_{\hat K}\hat w\|^2_{\hat f} = Ch^2\inf_{\hat q\in P_0(\hat K)}\|(I - \mathcal P_{\hat K})(\hat w+\hat q)\|^2_{\hat f}\\
\leq &Ch^2\inf_{\hat q\in P_0(\hat K)}(\|\hat w+\hat q\|_{\hat f}^2+\|\hat w+\hat q\|_{\hat K}^2)\leq Ch^2\inf_{\hat q\in P_0(\hat K)}\|\hat w+\hat q\|_{\hat K}\|\hat w+\hat q\|_{1,\hat K} \\
\leq  &Ch^2\inf_{\hat q\in P_0(\hat K)}\|\hat w+\hat q\|_{\hat K}\inf_{\hat q\in P_0(\hat K)}\|\hat w+\hat q\|_{1,\hat K}
\leq  Ch^2\|\hat w\|_{\hat K}|\hat w|_{1,\hat K}\leq C\| w\|_{ K}| w|_{1, K}.
\end{align*}

\end{proof}
	\begin{theorem}
		Let $(\bm{u};p)\in H_0(\grad\curl;\Omega)\times H_0^1(\Omega)$ be the solution of the variational problem \eqref{prob2} and $(\bm{u}_{h};p_{h})\in \mathring{\bm{V}}^{r-1}_{h}(\mathcal T_h)\times \mathring S^r_{h}(\mathcal T_h)$ be the solution of the discrete problem \eqref{prob22}. Suppose  $\bm{f} \in \bm L^{2}(\Omega)$, then under assumption \eqref{regularity-0}, there holds  $p=p_h=0$ and 
		\begin{equation}\label{err:u:anorm-1}
			\|\bm u-\bm u_h\|_{\grad\curl,h}\leq Ch^{1/2}\|\bm f\|.
		\end{equation}
	\end{theorem}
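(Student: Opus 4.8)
The plan is to run the quasi-optimal bound \eqref{err-abs} of Theorem~\ref{converg2} once more, but to estimate each constituent so that all powers of $\epsilon$ cancel. From \eqref{regularity-1}--\eqref{regularity-2} I will repeatedly use $\|\bm u\|_1+\|\curl\bm u\|\le C\|\bm f\|$, $|\curl\bm u|_1\le C\epsilon^{-1/4}\|\bm f\|$ and $\epsilon|\curl\bm u|_2\le C\epsilon^{1/4}\|\bm f\|$, and for the reduced solution $\|\curl\widetilde{\bm u}\|_1\le C\|\bm f\|$ together with $\|\curl(\bm u-\widetilde{\bm u})\|\le|\bm u-\widetilde{\bm u}|_1\le C\epsilon^{1/4}\|\bm f\|$ and $|\curl(\bm u-\widetilde{\bm u})|_1\le C\epsilon^{-1/4}\|\bm f\|$. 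Two elementary devices drive everything: if a quantity is bounded both by $Ah\epsilon^{-1/4}$ and by $A\epsilon^{1/4}$ then it is $\le Ah^{1/2}$ (take the geometric mean of the two bounds), and any product in which a singular $\epsilon^{-1/4}$ meets a compensating $\epsilon^{1/4}$---be it $\epsilon|\curl\bm u|_2$ or $\|\curl(\bm u-\widetilde{\bm u})\|$---is $\epsilon$-uniform.

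For the first term of \eqref{err-abs} I take $\bm v_h=\bm R_h^{r-1}\bm u$. The $\epsilon$-weighted part $\epsilon^{1/2}|\curl_h(\bm u-\bm R_h^{r-1}\bm u)|_{1,h}$ I bound directly by \eqref{Interp_err_R1-1} with $j=1$, so that it is controlled by $Ch^{1/2}\|\curl\bm u\|_1^{1/2}(\epsilon|\curl\bm u|_2)^{1/2}\le Ch^{1/2}\|\bm f\|$, the weight $\epsilon^{1/2}$ absorbing the blow-up of $|\curl\bm u|_2$. The unweighted parts $\|\curl_h(\bm u-\bm R_h^{r-1}\bm u)\|$ and $\|\bm u-\bm R_h^{r-1}\bm u\|$ carry no such weight, so I split $\bm u=\widetilde{\bm u}+(\bm u-\widetilde{\bm u})$ and apply \eqref{Interp_err_R0-1}--\eqref{Interp_err_R1-1} to each summand: the smooth part contributes $O(h^{1/2})$ through $\|\curl\widetilde{\bm u}\|_1\le C\|\bm f\|$, while in the part $\bm u-\widetilde{\bm u}$ the square-root structure of \eqref{Interp_err_R0-1}--\eqref{Interp_err_R1-1} pairs $\|\curl(\bm u-\widetilde{\bm u})\|^{1/2}\sim\epsilon^{1/8}$ against $\|\curl(\bm u-\widetilde{\bm u})\|_1^{1/2}\sim\epsilon^{-1/8}$ and so stays $\epsilon$-uniform. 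This yields $\|\bm u-\bm R_h^{r-1}\bm u\|_{\grad\curl,h}\le Ch^{1/2}\|\bm f\|$.

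The consistency term $E_h(\bm u,\bm v_0)$ is the heart of the matter, because the term-by-term bounds \eqref{I2}--\eqref{I3} leave the non-uniform factor $h|\curl\bm u|_1\sim h\epsilon^{-1/4}$. Writing $\bm a:=\bm v_0-\bm I_h\bm v_0$, I treat the two tangential-jump terms $F_2:=\sum_f\langle\curl\bm u,[\![\bm v_0\times\bm n_f]\!]\rangle_f$ and $F_3:=\epsilon\sum_f\langle\Delta\curl\bm u,[\![\bm v_0\times\bm n_f]\!]\rangle_f$ together. Inserting $\bm I_h\bm v_0$ by the $H(\curl)$-conformity of $\bm V_{h,c}^{r-1}$, replacing $\curl\bm u$ by $(I-\mathcal P_K)\curl\bm u$ in $F_2$ via Lemma~\ref{jump-property}, integrating by parts, and substituting \eqref{OriginProblem} in the form $\epsilon\curl\Delta\curl\bm u=\curl\curl\bm u+\bm u-\bm f$ in $F_3$, the two volume terms proportional to $(\bm a,\curl\curl\bm u)$ cancel and
\[ F_2+F_3=-\sum_{K}\big(\curl\bm a,(I-\mathcal P_K)\curl\bm u\big)_K+\epsilon(\curl\bm a,\Delta\curl\bm u)-(\bm a,\bm u-\bm f). \]
It is precisely this cancellation---made possible by Lemma~\ref{jump-property}---that removes the $\curl\curl\bm u\sim\epsilon^{-1/4}$ contribution that no splitting alone can control. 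The last term is $O(h)$ by \eqref{L2estimate} and $\|\bm u\|+\|\bm f\|\le C\|\bm f\|$; the term $\epsilon(\curl\bm a,\Delta\curl\bm u)$ admits the two bounds $C\epsilon^{1/4}\|\bm f\|\|\bm v_0\|_{a_h}$ (using $\|\curl\bm a\|\le C\|\curl_h\bm v_0\|$) and $Ch\epsilon^{-1/4}\|\bm f\|\|\bm v_0\|_{a_h}$ (using $\|\curl\bm a\|\le Ch|\curl_h\bm v_0|_{1,h}$ from \eqref{curlestimate}), hence is $\le Ch^{1/2}\|\bm f\|\|\bm v_0\|_{a_h}$ by the geometric-mean device; and $\sum_K(\curl\bm a,(I-\mathcal P_K)\curl\bm u)_K$ is split by $\widetilde{\bm u}$, its smooth part being $O(h)$ through $|\curl\widetilde{\bm u}|_1\le C\|\bm f\|$ and its part $\bm u-\widetilde{\bm u}$ being bounded both by $Ch\epsilon^{-1/4}\|\bm f\|\|\bm v_0\|_{a_h}$ (from $\|(I-\mathcal P_K)\curl(\bm u-\widetilde{\bm u})\|_K\le Ch|\curl(\bm u-\widetilde{\bm u})|_{1,K}$) and by $C\epsilon^{1/4}\|\bm f\|\|\bm v_0\|_{a_h}$ (from $\|(I-\mathcal P_K)\curl(\bm u-\widetilde{\bm u})\|_K\le C\|\curl(\bm u-\widetilde{\bm u})\|_K$), again giving $h^{1/2}$. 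Finally $F_1:=\epsilon\sum_f\langle\partial_{\bm n_f}\curl\bm u,[\![\curl_h\bm v_0]\!]\rangle_f$ is estimated as in \eqref{I1} by $C\epsilon h|\curl\bm u|_2|\curl_h\bm v_0|_{1,h}$, which is both $\le Ch\epsilon^{-1/4}\|\bm f\|\|\bm v_0\|_{a_h}$ and, by the inverse inequality $|\curl_h\bm v_0|_{1,h}\le Ch^{-1}\|\curl_h\bm v_0\|$, $\le C\epsilon^{1/4}\|\bm f\|\|\bm v_0\|_{a_h}$, hence $\le Ch^{1/2}\|\bm f\|\|\bm v_0\|_{a_h}$.

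Substituting these two $\mathcal O(h^{1/2})$ estimates into \eqref{err-abs} yields \eqref{err:u:anorm-1}, while $p=p_h=0$ is inherited from \eqref{prob2} and Theorem~\ref{converg2}. The genuine obstacle is the consistency term: unlike the fixed-$\epsilon$ analysis one cannot afford to bound $F_2$ and $F_3$ separately, and the decisive step is recognizing that, once Lemma~\ref{jump-property} has turned $\curl\bm u$ into $(I-\mathcal P_K)\curl\bm u$, the $\curl\curl\bm u$ contributions of $F_2$ and $F_3$ cancel; the remaining work is the disciplined use of the $\widetilde{\bm u}$-splitting and of the geometric-mean device to match every stray $\epsilon^{-1/4}$ with a compensating $\epsilon^{1/4}$ coming either from the singular-perturbation regularity $\epsilon|\curl\bm u|_2\le C\epsilon^{1/4}\|\bm f\|$ or from the smallness $\|\curl(\bm u-\widetilde{\bm u})\|\le C\epsilon^{1/4}\|\bm f\|$.
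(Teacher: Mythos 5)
Your proof is correct and reaches the same $\mathcal O(h^{1/2})$ bound, but the way you dispose of the critical consistency terms differs in mechanism from the paper. The paper first inserts the reduced solution $\widetilde{\bm u}$ into the tangential-jump terms, integrates by parts back to volume form, and then invokes the weak identity \eqref{ax-1} (tested with $\bm I_h\bm v_0\in H_0(\curl;\Omega)$) to convert everything into pairings against $\bm v_0-\bm I_h\bm v_0$, leaving a residual smooth jump $\langle\curl\widetilde{\bm u},[\![\bm v_0\times\bm n_f]\!]\rangle_f$ that is handled by Lemma \ref{jump-property} as in \eqref{I2}. You instead insert $\bm I_h\bm v_0$ at the outset, apply Lemma \ref{jump-property} directly to replace $\curl\bm u$ by its oscillation $(I-\mathcal P_K)\curl\bm u$, substitute the strong PDE into the $\epsilon\Delta\curl\bm u$ term, and exhibit the cancellation of the $(\curl\curl\bm u,\bm v_0-\bm I_h\bm v_0)$ contributions explicitly; the $\widetilde{\bm u}$-splitting then enters only on the surviving oscillation term. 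The two routes are algebraically equivalent (your cancellation is the strong-form counterpart of the paper's use of \eqref{ax-1}), but yours makes the role of Lemma \ref{jump-property} more transparent. A second genuine difference: for the normal-derivative term $F_1$ you use the inverse inequality $|\curl_h\bm v_0|_{1,h}\le Ch^{-1}\|\curl_h\bm v_0\|$ together with the geometric-mean device, which lets you bypass the refined projection estimate of Lemma \ref{p0f-bound} that the paper needs in \eqref{I1-1}; this is a mild simplification. The treatment of the interpolation error is essentially identical to \eqref{int-err-2}.

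One bookkeeping caveat: you write the reduced identity as ``$F_2+F_3=\dots$'' while the right-hand side you display is the reduction of $F_2-F_3$; the cancellation of the $\curl\curl\bm u$ terms occurs only for the correct relative sign of the two jump terms, which is the one produced by the integration by parts in the proof of Theorem \ref{converg2} (the statement of $E_h$ in that theorem carries a sign typo, and the paper's own estimate \eqref{I1-2} works with $F_2-F_3$). Since all subsequent bounds are in absolute value, this does not affect the validity of your argument, but the relative sign should be stated consistently because the whole uniform estimate hinges on that cancellation.
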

	\begin{proof}
		We use \eqref{err-abs} to estimate \eqref{err:u:anorm-1}. By \eqref{Interp_err_R0-1}--\eqref{Interp_err_R1-1}, \eqref{Interp_err_R1}, and \eqref{regularity-0}--\eqref{regularity-2},  we have
		\begin{align}\label{int-err-2}
				&\inf_{\bm v_h\in  \mathring{\bm V}^{r-1}_h(\mathcal T_h)}\|\bm u-\bm v_h\|_{\grad\curl,h}\le \|\bm u- \bm R_h^{r-1}  \bm u\|_{\grad\curl,h}\nonumber\\
				\le&\epsilon^{1/2}\|\grad\curl (\bm u- \bm R_h^{r-1} \bm u)\|+\big\|\curl \big((\bm u-\widetilde{\bm u})-  \bm R_h^{r-1}   (\bm u-\widetilde{\bm u})\big)\big\|\nonumber\\&+\|\curl(\widetilde{\bm u}- \bm R_h^{r-1}  \widetilde{\bm u})\|+\|(\bm u-\widetilde{\bm u})- \bm R_h^{r-1}  (\bm u-\widetilde{\bm u})\|+\|\widetilde{\bm u}-  \bm R_h^{r-1}  \widetilde{\bm u}\|\nonumber\\
				\le&  Ch^{1/2}\Big(\epsilon^{1/2}|\curl \bm u|_2^{1/2}\|\curl \bm u\|_1^{1/2}+|\curl( \bm u-\widetilde{\bm u})|_1^{1/2}\|\curl (\bm u-\widetilde{\bm u})\|^{1/2}\nonumber\\&+h^{1/2}\big(\|\curl\widetilde{\bm u} \|_1+\|\bm u-\widetilde{\bm u} \|_1+\|\curl(\bm u-\widetilde{\bm u})\|^{1/2}\|\curl(\bm u-\widetilde{\bm u})\|_1^{1/2}\nonumber\\&+\|\widetilde{\bm u} \|_1+\|\curl\widetilde{\bm u}\|^{1/2}\|\curl\widetilde{\bm u}\|_1^{1/2}\big)\Big)\nonumber\\
				\le&  Ch^{1/2}\big(\epsilon^{1/2}|\curl \bm u|_2^{1/2}\|\curl \bm u\|_1^{1/2}+\|\curl( \bm u-\widetilde{\bm u})\|_1^{1/2}\|\bm u-\widetilde{\bm u}\|_1^{1/2}+\|\bm f\|\big)\nonumber\\
					\le& Ch^{1/2}\big(\epsilon^{1/8}|\curl\bm u|_1^{1/2}\|\bm f\|^{1/2}+\|\bm f\|\big)
				\le Ch^{1/2}\|\bm f\|.
		\end{align}
	
		Now we estimate the consistency error term by term.  We proceed as in \eqref{I1} to estimate the first term.  Applying \eqref{faceproj} to $\curl_h\bm v_0$ and Lemma \ref{p0f-bound} to $\partial_{\bm n_{\partial K}} (\curl\bm u)$, respectively, it holds that 
		\begin{align}\label{I1-1}
				&\left|\sum_{f\in \mathcal{F}_h}\langle\epsilon{\partial_{\bm n_f} (\curl\bm u)}, [\![\curl_h\bm v_0]\!]\rangle_f\right|\nonumber\\
				=&\epsilon\bigg|\sum_{K\in\mathcal{T}_h}\sum_{f\subset{\partial K}}\int_f\left( {\partial_{\bm n_{\partial K}} (\curl\bm u)} -\mathcal P_{f}{\partial_{\bm n_{\partial K}} (\curl\bm u)}\right)\cdot\left(\curl_h\bm v_0-\mathcal P_{f}\curl\bm v_0\right)\d A\bigg|\nonumber\\
				\le&C\epsilon h^{1/2}|\curl\bm u|_1^{1/2}|\curl \bm u|_2^{1/2} |\curl_h \bm v_0|_{1,h}\nonumber\\
				\le &C\epsilon^{1/2} h^{1/2} |\curl\bm u|_1^{1/2}|\curl \bm u|_2^{1/2} \|\bm  v_0\|_{a_h}\nonumber\\
				\le&Ch^{1/2}\|\bm f\| \|\bm  v_0\|_{a_h}.
		\end{align}
	For the last two terms of the consistency error, we have
		\begin{align}\label{I1-2}
				&\sum_{f\in\mathcal F_h}\langle\curl\bm u, [\![\bm v_0\times \bm n_f]\!]\rangle_f-\sum_{f\in \mathcal{F}_h}\epsilon\langle \Delta\curl\bm u,[\![\bm v_0\times \bm n_f]\!]\rangle_f\nonumber\\
				=&\sum_{f\in\mathcal F_h}\big(\langle\curl\bm u-\curl\widetilde{\bm  u}, [\![\bm v_0\times \bm n_f]\!]\rangle_f+\langle\curl\widetilde{\bm  u}, [\![\bm v_0\times \bm n_f]\!]\rangle_f
				-\epsilon\langle \Delta\curl\bm u,[\![\bm v_0\times \bm n_f]\!]\rangle_f\big)\nonumber\\
				=&(\curl  \curl (\bm u-\widetilde{\bm  u}),  \bm v_0)- (\curl (\bm u-\widetilde{\bm  u}), \curl_h \bm v_0)+\langle\curl\widetilde{\bm  u}, [\![\bm v_0\times \bm n_f]\!]\rangle_f
			\nonumber	\\&-\epsilon(\curl \Delta\curl \bm u, \bm v_0)+\epsilon( \Delta\curl \bm u, \curl_h \bm v_0)\qquad\text{(integration by parts)}\nonumber\\
				\overset{\eqref{ax-1}}=&\epsilon( \Delta\curl \bm u, \curl_h( \bm v_0-\bm I_h\bm v_0))- (\curl (\bm u-\widetilde{\bm  u}), \curl_h( \bm v_0-\bm I_h\bm v_0))\nonumber\\
				&-(\bm u-\widetilde{\bm u},\bm v_0-\bm I_h\bm v_0)+\langle\curl\widetilde{\bm  u}, [\![\bm v_0\times \bm n_f]\!]\rangle_f\nonumber\\
				\leq&Ch^{1/2}|\curl_h \bm v_0|_1^{1/2}\|\curl_h \bm v_0\|^{1/2}\big(\epsilon|\curl \bm u|_{2}+\|\curl (\bm u-\widetilde{\bm  u})\|\big)\nonumber\\
				&+Ch\|\bm u-\widetilde{\bm u}\|\|\curl_h\bm v_0\|+\langle\curl\widetilde{\bm  u}, [\![\bm v_0\times \bm n_f]\!]\rangle_f\quad \big(\eqref{curlestimate} \text{ and } \eqref{L2estimate}\big)\nonumber\\
				\overset{\eqref{regularity-2}}\le&Ch^{1/2}\epsilon^{1/4}\|\bm f\||\curl_h \bm v_0|_{1,h}^{1/2}\|\curl_h \bm v_0\|^{1/2}+Ch\epsilon^{1/4}\|\bm f\|\|\curl_h\bm v_0\|+\langle\curl\widetilde{\bm  u}, [\![\bm v_0\times \bm n_f]\!]\rangle_f\nonumber\\
				\overset{\eqref{I2}}\le& Ch^{1/2}\|\bm f\|\|\bm v_0\|_{a_h}+Ch\epsilon^{1/4}\|\bm f\|\|\bm v_0\|_{a_h}+Ch|\curl\widetilde{\bm u}|_1\|\bm v_0\|_{a_h}.
		\end{align}
		Combining  \eqref{int-err-2}, \eqref{I1-1}, and  \eqref{I1-2}, we can get \eqref{err:u:anorm-1} based on the assumption \eqref{regularity-0}.
	\end{proof}
\section{NUMERICAL EXPERIMENTS}
In this section, numerical results are provided to verify the theoretical results. In this section, we assume $\alpha=\beta=1$. 

\begin{example}
\end{example}
We first consider problem \eqref{OriginProblem} on $\Omega=[0,1]^3$ with a smooth exact solution
\begin{align*}
\bm u =&\left(
\begin{array}{c}
\sin^3(\pi x)\sin^2(\pi y)\sin^2(\pi z)\cos(\pi y)\cos(\pi z) \\
\sin^3(\pi y)\sin^2(\pi z)\sin^2(\pi x)\cos(\pi z)\cos(\pi x)\\
-2\sin^3(\pi z)\sin^2(\pi x)\sin^2(\pi y)\cos(\pi x)\cos(\pi y)
\end{array}
\right).
\end{align*}
The source term $\bm f=-\epsilon\text{curl}\Delta\text{curl}\bm u+\text{curlcurl} \bm u +\bm u $ can be obtained by a simple  calculation. 

We first uniformly partition the domain into eight small cubes and then each small cube is partitioned into eight half-sized cubes to get a refined grid.

We first use the finite element spaces $\bm V_h^0(\mathcal T_h)\times S^1_h(\mathcal T_h)$ in discrete problem \eqref{prob22}. We present $\|\bm u-\bm u_h\|$ and $\|\bm u-\bm u_h\|_{\grad\curl,h}$ for different values of $\epsilon$ in Figure \ref{fig1}. From the result of $\|\bm u-\bm u_h\|_{\grad\curl,h}$ shown in Figure \ref{fig1}(a),  we can observe that the numerical results are consistent with Theorem \ref{errorestimate-uh}. 

We then apply  $\bm V_h^1(\mathcal T_h)\times S^2_h(\mathcal T_h)$ in \eqref{prob22}. See Figure \ref{fig2} for the numerical results. Again the numerical result shown in Figure \ref{fig2}(a) validate the theoretical result in  Theorem \ref{errorestimate-uh}. 

We also observe from Figures \ref{fig1}(a) and \ref{fig2}(a) that when $
\epsilon = 10^{-4}, 10^{-6}$, and $10^{-8}$, the convergence rate of $\|\bm u-\bm u_h\|_{\grad\curl,h}$ is 2. This is because the part $\|\curl(\bm u-\bm u_h)\|$ dominates in these cases, and $\|\curl(\bm u-\bm u_h)\|$ has convergence rate 2.

From Figures \ref{fig1}(b) and  \ref{fig2}(b), we can see that the difference between the two families $\bm V_h^0(\mathcal T_h)$ and $\bm V_h^1(\mathcal T_h)$ is the convergence order in the sense of $L^2$ norm.

\begin{figure}[h]
  \centering
  \subfigure[$\|\bm u -\bm u_h\|_{a_h}$]{
   \begin{minipage}[t]{0.42\linewidth}
    \centering
    \includegraphics[width=2in]{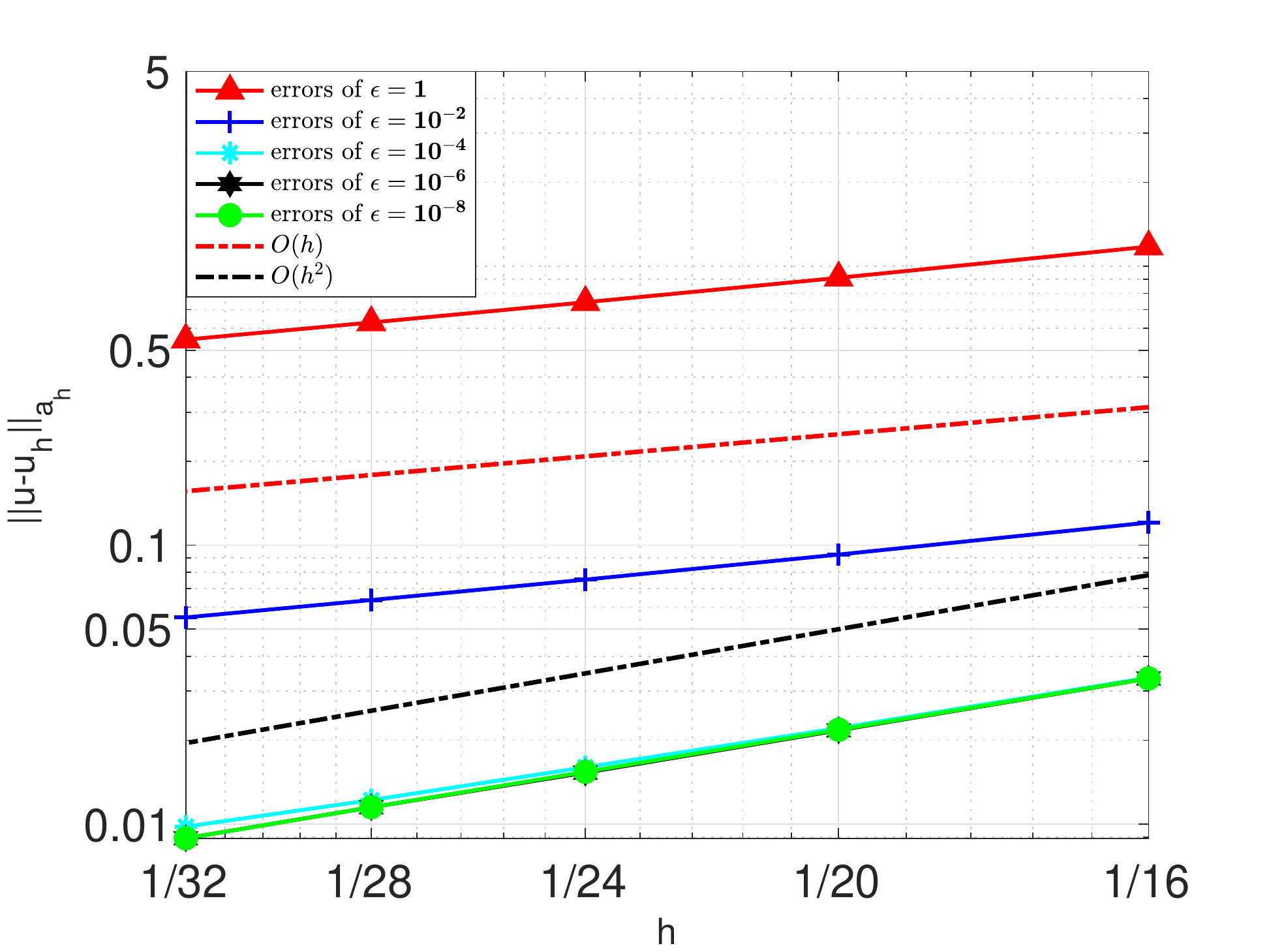}
   \end{minipage}
    \label{fig1b}
  }
  \subfigure[$\|\bm u -\bm u_h\|$]{
   \begin{minipage}[t]{0.42\linewidth}
    \centering
    \includegraphics[width=2in]{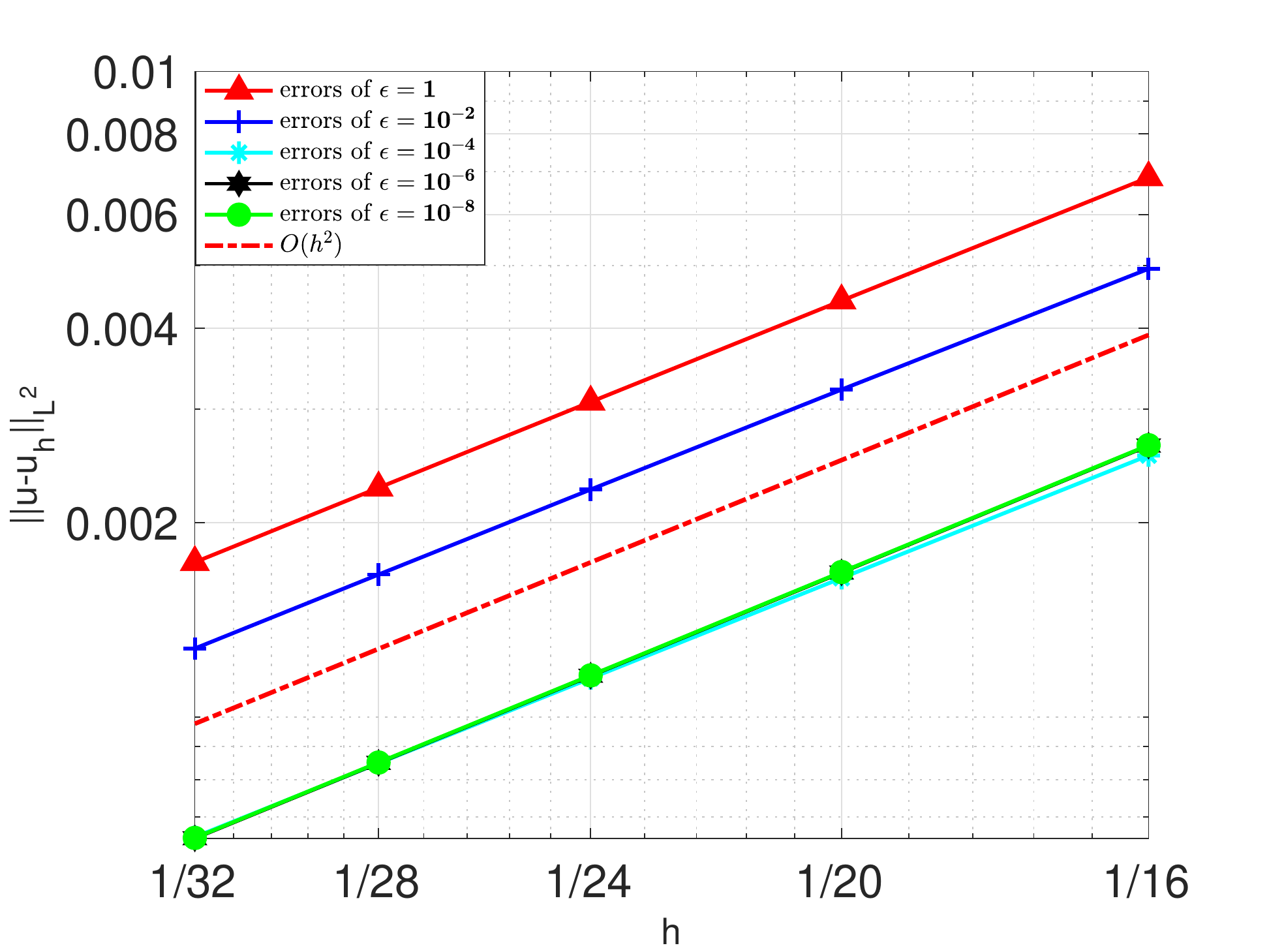}
   \end{minipage}
    \label{fig1a}
  }
  \caption{Error between $\bm u-\bm u_h$ with $r=1$.}
  \label{fig1}
 \end{figure}

 \begin{figure}[h]
  \centering
  \subfigure[$\|\bm u -\bm u_h\|_{a_h}$]{
   \begin{minipage}[t]{0.42\linewidth}
    \centering
    \includegraphics[width=2in]{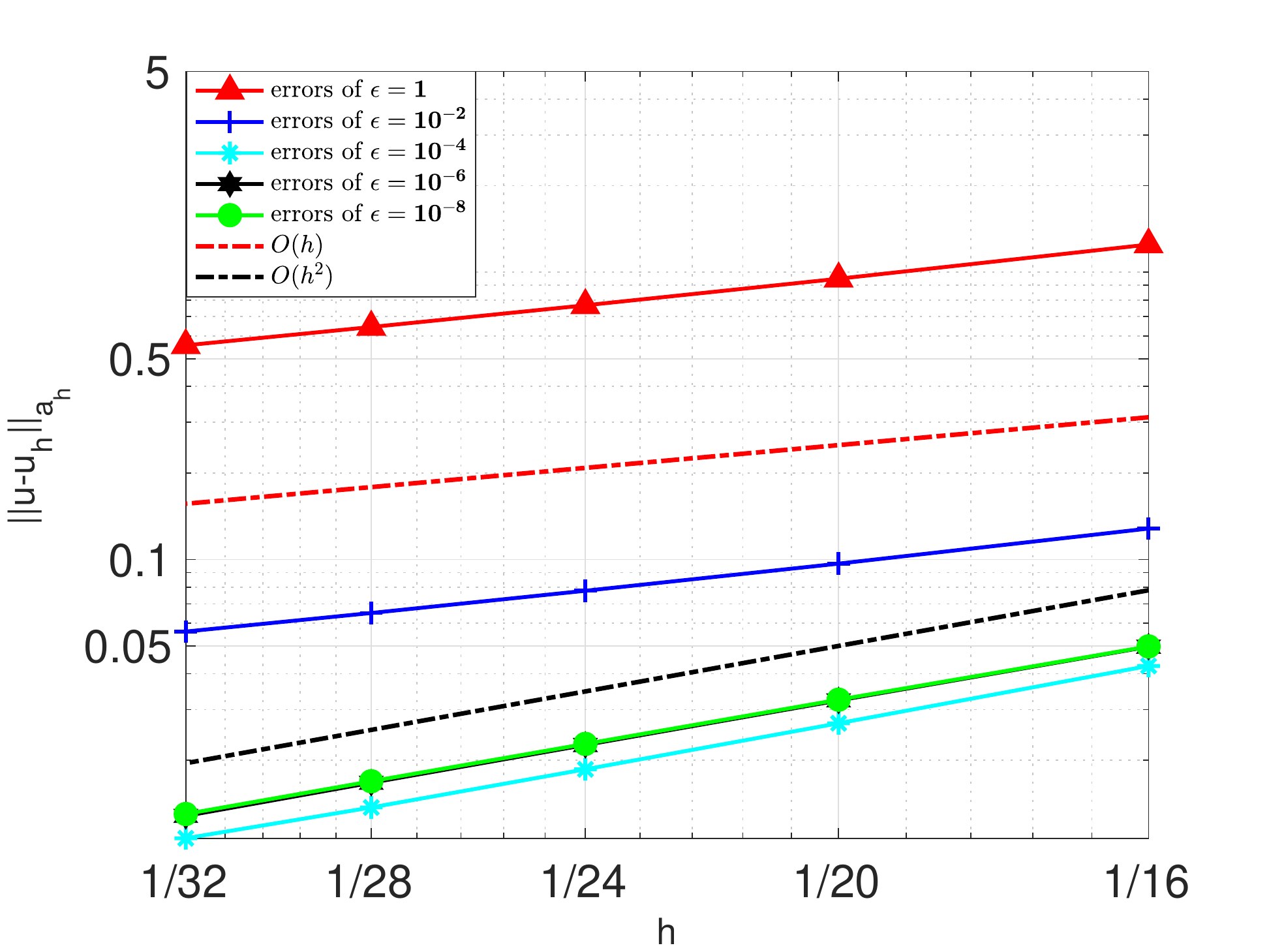}
    \label{fig2b}
   \end{minipage}
  }
  \subfigure[$\|\bm u -\bm u_h\|$]{
   \begin{minipage}[t]{0.42\linewidth}
    \centering
    \includegraphics[width=2in]{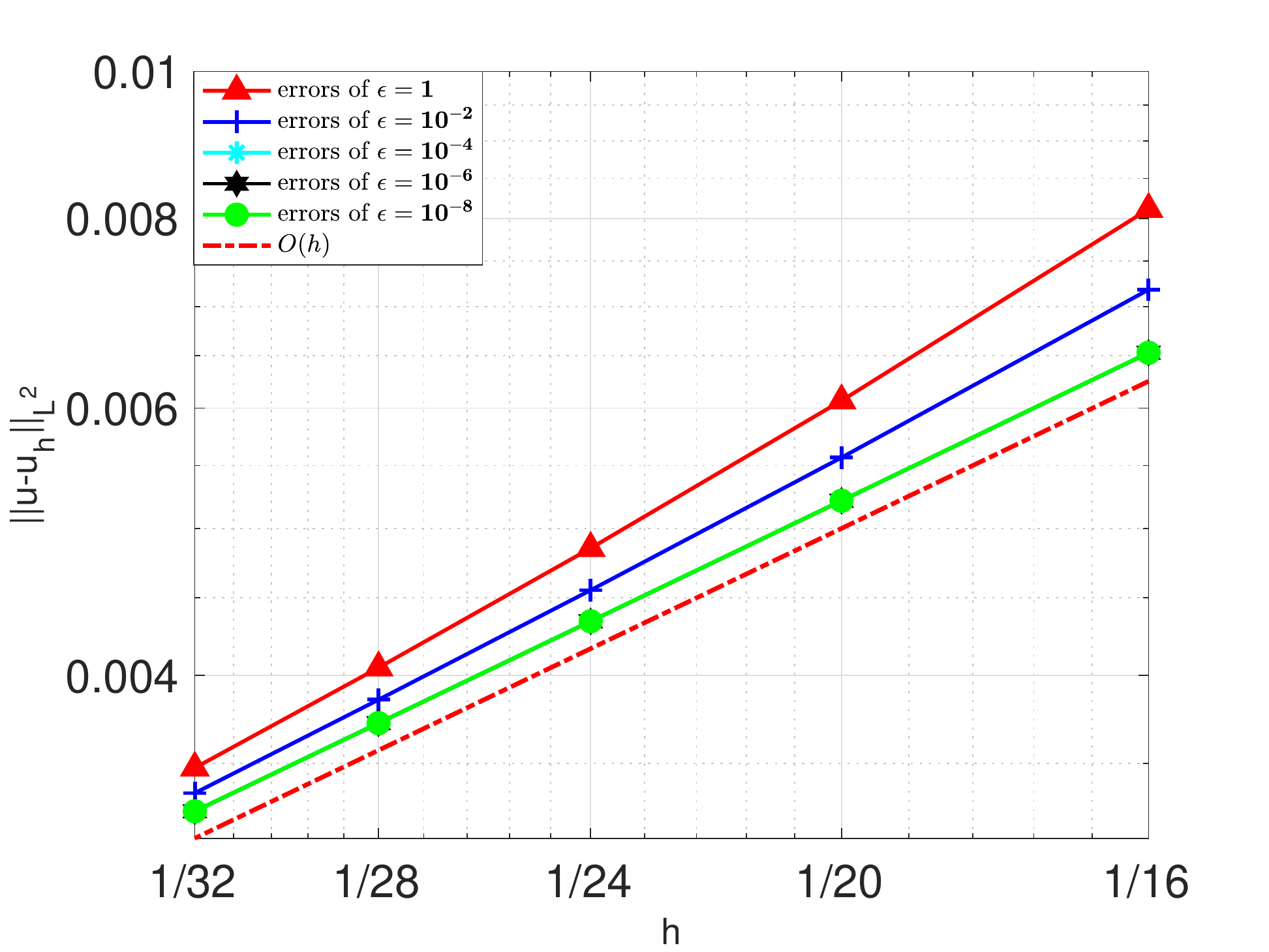}
    \label{fig2a}
   \end{minipage}
  }
  \caption{Error between $\bm u-\bm u_h$ with $r=2$.}
  \label{fig2}
 \end{figure}

\begin{example}\label{ex1}
\end{example}
We now consider an exact solution with boundary layer.  We take the source term $\bm f$ as
\[\bm f=\text{curlcurl}\widetilde{\bm{u}} +\widetilde{\bm{u}}\]
with
\begin{equation*}
\widetilde{\bm{u}}=\begin{pmatrix}
2\pi \cos(\pi y)\sin(\pi x)^2\sin(\pi y)\sin(\pi z)^2\\
-2\pi\cos(\pi x)\sin(\pi x)\sin(\pi y)^2\sin(\pi z)^2\\
0
\end{pmatrix}.
\end{equation*} 
Then according to \eqref{regularity-2},  $\|\bm u-\widetilde{\bm{u}}\|_1\leq C\epsilon^{1/4}\|\bm f\|$, which goes to 0 when $\epsilon$ approaches 0.

\begin{table}[h]
	\caption{Example \ref{ex1}: errors between exact solution and numerical solution with $\epsilon=10^{-8}$.}
	\begin{tabular}{cccccccccc}
		\hline
		$h$ & $\|\widetilde{\bm u}-\bm u_h\|$ & order & $\|\text{curl}_h(\widetilde{\bm u}- \bm u_h)\|$ & order & $\|\widetilde{\bm u}-\bm u_h\|_{a_h}$ & order \\ \hline
		1/4   & 3.231e$-$01         & *     & 3.451e+00                       & *     & 3.781e+00                & *     \\
		1/8   & 1.623e$-$01            & 0.99     & 1.937e+00                        & 0.83     & 2.108e+00                & 0.84     \\
		1/16  & 8.261e$-$02           & 0.97     & 1.295e+00                       & 0.58     & 1.389e+00                & 0.60     \\
		1/32  & 4.168e$-$02           & 0.99     & 9.049e$-$01                         & 0.52     & 9.628ee$-$01                & 0.53     \\ \hline
	\end{tabular}
\label{table1}
\end{table}

\begin{table}[h]
	\caption{Example \ref{ex1}: errors between exact solution and numerical solution with $\epsilon=0$.}
	\begin{tabular}{cccccccc}
		\hline
		$h$ & $\|\widetilde{\bm u}-\bm u_h\|$ & order & $\|\text{curl}_h(\widetilde{\bm u}- \bm u_h)\|$ & order & $\|\widetilde{\bm u}-\bm u_h\|_{a_h}$ & order \\ \hline
		1/4   & 3.231e$-$01         & *     & 3.451e+00                       & *     & 3.774e+00                & *     \\
		1/8   & 1.623e$-$01            & 0.99     & 1.937e+00                        & 0.83     & 2.100e+00                & 0.85     \\
		1/16  & 8.261e$-$02           & 0.97     & 1.295e+00                       & 0.58     & 1.378e+00                & 0.61     \\
		1/32  & 4.168e$-$02         & 0.99     & 9.049e$-$01                      & 0.52     & 9.466e$-$01               & 0.54     \\ \hline
	\end{tabular}
\label{table2}
\end{table}
We use the finite element space $\bm V_h^0(\mathcal T_h)\times S^1_h(\mathcal T_h)$ again. We present the errors $\bm u-\bm u_h$ and $\text{curl}\bm{u}-\curl\bm u_h$ in the sense of $L^2$ norm, and also the error $\bm u-\bm u_h$ in the energy norm $\|\cdot\|_{a_h}$ in the table \ref{table1}--\ref{table2}. The numerical results show that the errors converge to zeros with order $1$, $0.5$, and $0.5$, which agrees with the theoretical estimates \eqref{err:u:anorm-1}.
\section{Conclusion}
In this paper, we constructed two fully $H(\grad\curl)$-nonconforming finite elements on cubical meshes, which together with the Lagrange element and the nonconforming Stokes element in \cite{zhang2009low}  form a nonconforming finite element Stokes complex. Moreover, we proved a special property of the proposed elements, see Lemma \ref{jump-property}. With this property, we proved the optimal convergence and uniform convergence when applying the elements to solve the singularly perturbed quad-curl problem. 
 
	\bibliographystyle{plain}
	\bibliography{Nonconforming_ref.bib}

\end{document}